\definecolor{darkgreen}{RGB}{0,128,0}
\definecolor{darkblue}{RGB}{0,0,205}
\definecolor{myred}{RGB}{227,0,34}
\definecolor{chocolate}{RGB}{123,63,0}
\def\ZZ{ \mathbb{Z} }
\def\NN{ \mathbb{N} }
\DeclareMathOperator{\Spec}{Spec}
\renewcommand{\max}{\mathrm{max}}
\newcommand{\x}{\mathbf{x}}
\newcommand{\y}{\mathbf{y}}
\newcommand{\UU}{\mathcal{U}}
\newcommand{\A}{\mathcal{A}}
\newcommand{\defit}[1]{{\textsf{#1}}}
\newcommand{\vprod}{\sideset{}{\mkern -2mu\raisebox{0.2em}{$\vphantom{\prod}^v$}} \prod}
\numberwithin{equation}{section}
\theoremstyle{plain}
\newtheorem{lemma}{Lemma}[section]
\newtheorem{theorem}[lemma]{Theorem}
\newtheorem{proposition}[lemma]{Proposition}
\newtheorem{corollary}[lemma]{Corollary}
\theoremstyle{definition}
\newtheorem{definition}[lemma]{Definition}
\newtheorem{examples}[lemma]{Examples}
\theoremstyle{remark}
\newtheorem{remark}[lemma]{Remark}
\renewcommand{\epsilon}{\varepsilon}
\renewcommand{\theta}{\vartheta}
\renewcommand{\phi}{\varphi}
\setlist[enumerate,1]{itemsep=0.05cm, label=\textup{(}\arabic*\textup{)}}
\setlist[enumerate,2]{label=\textup{(}\roman*\textup{)}}
\setlist[itemize,1]{itemsep=0.05cm}
\title{On Class Groups of Upper Cluster Algebras}
\author{Mara Pompili}
\address{University of Graz, Department of Mathematics and Scientific Computing, NAWI Graz, Heinrichstrasse 36, 8010 Graz, Austria}
\email{mara.pompili@uni-graz.at}
\thanks{The author was partially supported by the Austrian Science Fund (FWF), projects W1230 and P36742-N}
\subjclass[2020]{Primary 13F60. Secondary 13F05, 13F15}
\keywords{Cluster algebras. Upper cluster algebras. UFDs. Factorization theory. Krull domains. Class groups.}
\begin{document}
	
	\maketitle
	\begin{abstract}  We compute the class group of a full rank upper cluster algebra in terms of its exchange polynomials. As a corollary, we recover a theorem by Cao, Keller, and Qin from 2023 characterizing the UFDs among these algebras. Furthermore, under the additional hypothesis of acyclicity, we obtain a result by Garcia Elsener, Lampe, and Smertnig from 2019.  Moreover, we show that all cluster and upper cluster algebras are finite factorization domains, meaning that every non-unit factors as a product of atoms (or irreducibles) and, for each element, there are only finitely many such factorizations up to order and associates. This strengthens another result by Cao, Keller, and Qin showing that cluster and upper cluster algebras are atomic.  \end{abstract}
	
	\section*{Introduction}
	In recent years, cluster algebras have emerged as a powerful tool in various branches of mathematics, including algebraic geometry, representation theory, and mathematical physics. These structures provide deep insights into the interplay between algebraic and combinatorial phenomena. Strongly related to a cluster algebra is its upper cluster algebra. Upper cluster algebras are overrings of cluster algebras that are in general better behaved from a ring-theoretic perspective and often even coincide with the cluster algebra, for instance, in the locally acyclic case  \cite{BFZ05, M14}.
 	
	Cluster algebras were introduced by Fomin and Zelevinsky \cite{FZ02} in the early 2000s, with the intent of investigating dual canonical bases and total positivity within semisimple Lie groups. Quickly, cluster algebra theory has evolved into a distinct and self-sustained domain, transcending its initial motivations, cf. for example the surveys \cite{FZ03, K12, L10, LW14, N22, Z05}. 
	It has found numerous applications, particularly in fields like representation theory, Poisson geometry, and Teichm\"uller theory. 
    In this paper we focus on ring-theoretic properties of (upper) cluster algebras, following in the footsteps of \cite{CKQ22, GELS19, GLS13, M13}.

    The Laurent phenomenon (Theorem \ref{thm:laurentphenomenom}) implies that every seed gives rise to an embedding of the cluster algebra into a Laurent polynomial ring. The upper cluster algebra is defined as the intersection of all these Laurent polynomial rings. Therefore upper cluster algebras are an upper bound for cluster algebras. The intrinsic characteristics of upper cluster algebras simplify the exploration of their factorization properties. An even more accessible scenario emerges within the case of full rank cluster algebras, where upper cluster algebras take the form of a finite intersection of Laurent polynomial rings.
 
	Locally acyclic cluster algebras and full rank upper cluster algebras are Krull domains. 
    Krull domains \cite{FOSSUM} are one of the central objects in the study of non-unique factorizations. They form an important generalization of unique factorization domains (UFDs), because they possess unique factorization on the level of divisorial ideals, which allows one to systematically study factorizations of elements.
	A key invariant of Krull domains are their class groups, which measure the failure of unique factorization in these domains. In particular, a domain is a UFD if and only if it is a Krull domain with trivial class group. In the present paper, we study class groups of upper cluster algebras that satisfy the {\em starfish condition at one seed} (see Definition \ref{def:starfish}), in particular this includes all full rank upper cluster algebras. 
	
	The exploration of factorization properties within cluster algebras was initially undertaken by Geiss, Leclerc, and Schröer \cite{GLS13}. Subsequently, Garcia Elsener, Lampe, and Smertnig achieved a breakthrough by computing class groups and their ranks for acyclic cluster algebras \cite{GELS19}. 
	Very recently, Cao, Keller, and Qin \cite{CKQ22} switched the focus to upper cluster algebras. 
  They provided a complete characterization of full rank upper cluster algebras that are UFDs and they also exhibited local factorization properties using {\em valuation pairs.}
	
	In this paper, we simultaneously generalize results of Garcia Elsener, Lampe, and Smertnig and Cao, Keller, and Qin, computing the class group of full rank upper cluster algebras in terms of the exchange polynomials. We show that the class group $\mathcal{C}(\UU)$ of a full rank upper cluster algebra $\mathcal{U}$ is a finitely generated free abelian group with rank $r=t-n$, where $t$ is the number of irreducible factors of their exchange polynomials and $n$ is the number of exchangeable variables (see Theorem \ref{thm:rankclassgroup}). Furthermore, we show that, in this case, the upper cluster algebra $\UU$ contains infinitely many height-1 prime divisors in each class. This leads to a fascinating dichotomy between those upper cluster algebras that are UFDs and those that are not: in the latter case every finite set $L\subseteq \ZZ_{\ge 2}$ can be realized as a length set of some element. This result even applies to a larger class of upper cluster algebras than the full rank ones (see Section \ref{sec3}).

    We remark that, compared to the results of Garcia Elsener, Lampe, and Smertnig \cite{GELS19}, we need the additional assumption on full rankness; on the other hand, we do not need the acyclicity hypothesis present in their work.
    Thus, while our results do not fully generalize results of \cite{GELS19} in every case, they do so in a large and important subclass.

    Whereas factorization properties in cluster algebras that are Krull domains have been studied, so far it has not been considered which factorization properties hold in full generality in arbitrary (upper) cluster algebras. 
    For instance, the Markov cluster algebra is not a Krull domain \cite[Section 6]{GELS19}.
    Interestingly, its upper cluster algebra is however a UFD.
    Indeed, the question of whether upper cluster algebras that are not Krull domains exist remains an open problem.
    In the setting of (upper) cluster algebras without any extra conditions, we establish that every (upper) cluster algebra is a finite factorization domain (see Theorem~\ref{prop:FF}). That is, every element has a factorization into atoms (i.e., irreducible elements) and for a given element there are only finitely many such factorizations up to permutation and associativity.
    This improves on a recent result by which such algebras are bounded factorization domains (i.e., every element has at most finitely many different factorization lengths) \cite[Appendix A]{CKQ22}.
    
	The paper is organized as follows. In Section \ref{sec1}, we recall basic definitions and results on cluster algebras, factorization theory, and Krull domains. In Section \ref{sec2}, we study some factorization properties of (upper) cluster algebras, proving that cluster algebras and upper cluster algebras are finite factorization domains. The proof we will not necessitate preliminaries on Krull domains. In Section \ref{sec3}, we focus on full rank upper cluster algebras, computing their class groups. Finally in Section \ref{sec4}, we give an interpretation in terms of multiplicative ideal theory of the notion of valuation pairing and local factorization introduced in \cite{CKQ22}.
	
	
	\subsection*{Notations and assumptions}Throughout the paper we consider cluster algebras of geometric type, allowing frozen variables. However, we always assume that all frozen variables are invertible. Moreover, $K$ will denote a field of characteristic zero, or the ring $\mathbb{Z}$ of integers.
	If the base ring $K$ is a field, we assume that the underlying ice quiver $\Gamma(B)$ of the exchange matrix $B$ of our cluster algebra has no isolated exchangeable vertices. 
	A domain is a non-zero commutative ring $A$ without non-zero zero-divisors. We denote by $A^\times$ its group of units, by $A^\bullet=A\setminus \{0\}$ its monoid of non-zero elements, and by $\mathbf{q}(A)$ its quotient field.
	We denote by $\NN$ the semigroup of positive integers and by $\NN_0$ the monoid $\NN\cup \{0\}$. Moreover, if $n\in \NN$, we denote by $[1,n]$ the set $\{1,\dots,n\}$.

	\section{Preliminaries}\label{sec1}

	\subsection{Quivers}
	A  \defit{quiver} is a finite directed graph. Thus, it is a tuple $\mathcal{Q}=(\mathcal{Q}_0,\mathcal{Q}_1,s,t)$ where $\mathcal{Q}_0$ (the set of vertices) and $\mathcal{Q}_1$ (the set of arrows) are finite sets and $s,t\colon \mathcal{Q}_1\to \mathcal{Q}_0$ are maps (the sources and the targets). 
	We write $\alpha\colon i \to j$ to indicate that $\alpha$ is an arrow in $\mathcal{Q}_1$ with $s(\alpha)=i$ and $t(\alpha)=j.$
	
	\begin{center}	
	{\em Through the entire paper, a quiver has no oriented cycles of length one or two.}
	\end{center}

    \begin{definition}[Ice quivers]
		An \defit{ice quiver} is a quiver $\mathcal{Q}=(\mathcal{Q}_0,\mathcal{Q}_1,s,t)$ together with a partition of $\mathcal{Q}_0$ into \defit{exchangeable} and \defit{frozen} vertices, with the assumption that there are no arrows between two frozen vertices.
		The \defit{exchangeable part} of $\mathcal{Q}$ is the subquiver on the set of exchangeable vertices. We say that $\mathcal{Q}$ is \defit{acyclic} if its exchangeable part is an acyclic quiver, i.e., if it does not contain any oriented cycles.
	\end{definition}
	
	Let $n,m\in \NN_0$ such that $n+m>0$. Let $\mathcal{Q}=(\mathcal{Q}_0,\mathcal{Q}_1,s,t)$ be an ice quiver with exchangeable vertices $[1,n]$ and frozen vertices $[n+1,n+m].$ We can associate to $\mathcal{Q}$ a matrix $B=B(\mathcal{Q})=(b_{ij})\in \mathcal{M}_{(n+m)\times n}(\ZZ)$ defined by \[b_{ij}=|\,\alpha\colon i \to j\,|-|\,\alpha\colon j \to i\,|.\]
	
	\smallskip
	
	Given an $(n+m)\times n$ matrix, its \defit{principal part} is the submatrix supported on the first $n$ rows. Notice that the principal part of $B(\mathcal{Q})$ is skew-symmetric.
	
	\begin{definition}[Exchange matrices]
		A matrix $B=(b_{ij})\in \mathcal{M}_{n\times n}(\ZZ)$ is \defit{skew-symmetrizable} if there exists a diagonal matrix $D\in \mathcal{M}_{n\times n}(\NN)$ such that $DB$ is skew-symmetric. 
		An $(n+m)\times n$ integer matrix is an \defit{exchange matrix} if its principal part is skew-symmetrizable. 
	\end{definition}
	For a matrix $B=(b_{ij})\in \mathcal{M}_{n\times n}(\ZZ)$ being skew-symmetrizable is equivalent to the existence of positive integers $d_1,\dots,d_n$ such that $d_ib_{ij}=-d_jb_{ji}$ for every $i,j\in[1,n],$ hence if $B$ is a skew-symmetrizable matrix, either $b_{ij}=b_{ji}=0$ or $b_{ij}b_{ji}< 0$, so in particular $b_{ii}=0$.
	
	\begin{remark}
		Let $\mathcal{Q}$ be an ice quiver, then the matrix $B(\mathcal{Q})$ is an exchange matrix with skew-symmetric principal part.
		Conversely, we can associate to any exchange matrix $B$ an ice quiver $\Gamma(B)$. The exchangeable and frozen vertices of $\Gamma(B)$ are $[1,n]$ and $[n+1,n+m],$ respectively. The arrows of $\Gamma(B)$ are defined as follows: if $b_{ij}>0$, add $b_{ij}$ arrows from $i$ to $j$. If $i$ is a frozen vertex and $b_{ij}<0$, then add also $-b_{ij}$ arrows from $j$ to $i.$
		If $B$ has skew-symmetric principal part, then $B(\Gamma(B))=B.$ 
	\end{remark}   
	
	\subsection{Seeds, mutations and cluster algebras} 
	A \defit{cluster} is a pair $(\x,\y)$ with $\x=(x_1$,$\ldots$,~$x_n)$ and      
	\mbox{$\y=(x_{n+1}$,$\ldots$,~$x_{n+m})$} such that $(x_1$,$\ldots$,~$x_n$,$\ldots$,~$x_{n+m})$ are $n+m$ algebraically independent indeterminates over $K$. We refer to the elements of $\x$ as \defit{exchangeable variables} and to the elements of $\y$ as \defit{frozen variables}.	
	Given a cluster, the field $\mathcal{F}=\mathbf{q}(K)(x_1$,$\ldots$,~$x_{n+m})$ is called the \defit{ambient field}.
	
	\begin{definition}[Seeds]\label{def:seed}
		A \defit{seed} is a triple $\Sigma=(\x,\y, B)$ such that $(\x,\y)$ is a cluster and $B$ is a $(n+m)\times n$  exchange matrix.
        We always tacitly assume $\x=(x_1,\ldots,x_n)$ and $\y=(x_{n+1},\ldots,x_{n+m})$.
		A seed is called \defit{acyclic} if the ice quiver $\Gamma(B)$ is acyclic.
	\end{definition}
	
	We identify two seeds $\Sigma=(\x,\y,B)$ and $\Sigma'=(\x',\y',B')$ if there exists a permutation $\sigma\in S_{n+m}$ such that $\sigma(i)\in [1,n]$ for all $i\in [1,n]$ and 
    \begin{itemize}
        \item $b_{ij}=b'_{\sigma(i),\sigma(j)}$ for every $i,j\in [1,n+m]$;
    
        \item $x_i=x'_{\sigma(i)},\, y_j=y'_{\sigma(j)}$ for every $i\in[1,n]$ and $j\in [n+1,n+m]$.
    \end{itemize}
	
	\begin{definition}[Mutation of seeds]
		Let $\Sigma=(\x,\y, B)$ be a seed with ambient field $\mathcal{F}$. Fix an exchangeable index $i\in [1,n].$ The \defit{mutation} of $\Sigma$ in direction $i$ is the triple $\mu_i(\Sigma)=(\x_i,\y_i,B_i)$ defined as follows
		\begin{itemize}
			\item[(a)] $\x_i=(x_1,\dots,x_i',\dots,x_n)$ with $$x_i'=\frac{1}{x_i}\left(\prod_{b_{ki}>0}x_k^{b_{ki}}+\prod_{b_{ki}<0}x_k^{-b_{ki}}\right)\in \mathcal{F};$$
			\vspace{0.05cm}
			\item[(b)] $\y_i=\y$;
			\vspace{0.05cm}
			\item[(c)] $B_i=(b'_{jk})\in  \mathcal{M}_{(n+m)\times n}(\ZZ)$ with \[b_{jk}'=\begin{cases}-b_{jk}& \text{if $j=i$ or $k=i$;}\\ b_{jk}+\frac{1}{2}(|b_{ji}|b_{ik}+b_{ji}|b_{ik}|) & \text{otherwise.} \end{cases} \]
		\end{itemize}
	\end{definition}
	
	\smallskip
	
	One can prove that $\mu_i(\Sigma)$ is a seed with the same number of exchangeable and frozen variables and the same ambient field as $\Sigma$, and $B_i$ has the same rank as $B$. Notice that $(\mu_i\circ \mu_i)(\Sigma)=\Sigma.$

	\begin{definition}[Exchange polynomials]
		Let $\Sigma=(\x,\y, B)$ be a seed. Suppose that $i\in[1,n]$ is an exchangeable index. The polynomial $$f_i:=x_ix_i'\in K[\x,\y]$$ is called the \defit{exchange polynomial} associated to $x_i$ (with respect to the seed $\Sigma$). 
	\end{definition}
	
	Mutations induce an equivalence relation on seeds. Indeed we say that two seeds $\Sigma=(\x,\y,B)$ and $\Sigma'=(\mathbf{z},\y,C)$ are \defit{mutation-equivalent} if there exist $i_1,\dots,i_k\in [1,n]$ such that $\Sigma'=(\mu_{i_1}\circ\cdots\circ \mu_{i_k})(\Sigma)$. In this case, we write $\Sigma\sim \Sigma',$ or, if no confusion can arise, $\x\sim\mathbf{z}$. Denote by $\mathcal{M}(\Sigma)$ the mutation equivalence class of $\Sigma$ and by $\mathcal{X}=\mathcal{X}(\Sigma)$ the set of all exchangeable variables appearing in $\mathcal{M}(\Sigma).$
	
	\begin{definition}[Cluster algebras]
		Let $\Sigma=(\x,\y, B)$ be a seed. The \defit{cluster algebra} associated to $\Sigma$ is the $K$-algebra $$\A=\A(\Sigma)=K[x,y^{\pm 1}\mid x\in \mathcal{X}, y\in \y].$$ The elements $x\in \mathcal{X}$ are called \defit{cluster variables} of $\A(\Sigma)$; the cluster variables in the initial seed $\Sigma$ are called \defit{initial cluster variables}; the elements $y \in \y$ are called \defit{frozen variables.}
	\end{definition}
	
	In the literature one can also find the definition of a cluster algebra associated to a seed $\Sigma=(\x,\y, B)$ as the $K$-algebra $K[x,y\mid x\in \mathcal{X}, y\in \y]$. However we only deal with the case of invertible frozen variables.
	
	For any seed $\Sigma=(\x,\y, B)$, denote by $$\mathcal{L}_{\x}=K[u^{\pm 1}\mid u\in \x\cup \y]$$ the localization of $K[u\mid u\in \x\cup \y]$ at $S_\x:=\{\,{x_1^{a_1}\cdots x_{n+m}^{a_{n+m}}\mid a_i\in \NN_0}\,\} $ and by $$\mathcal{L}_{\x,\ZZ}=\ZZ[u^{\pm 1}\mid u\in \x\cup \y]$$ the localization of $\ZZ[u\mid u\in \x\cup \y]$ at $S_\x. $
	
	\begin{definition}[Upper cluster algebras]
		Let $\Sigma=(\x,\y, B)$ be a seed. The \defit{upper cluster algebra} associated to $\Sigma$ is the $K$-algebra $$\UU=\UU(\Sigma)=\bigcap_{\mathbf{z} \sim \x} \mathcal{L}_{\mathbf{z}}.$$
	\end{definition}
	
	An upper cluster algebra $\UU(\Sigma)$ is called a \defit{full rank upper cluster algebra} if its initial exchange matrix has full rank. For a full rank upper cluster algebra $\UU$, every exchange matrix of $\UU$ has full rank, since, as mentioned above, mutations preserve the rank (\cite[Lemma 3.2]{BFZ05}). 
	
	The following result, known as {\em Laurent phenomenon}, is due to Fomin and  Zelevinsky and explains the relation between cluster algebras and upper cluster algebras.
	\begin{theorem}[Laurent phenomenon, \cite{FZ02}]\label{thm:laurentphenomenom}
		Let $\Sigma=(\x,\y, B)$ be a seed. Let $\mathcal{X}(\Sigma)$ be the set of cluster variables associated to $\Sigma$ and let $\A(\Sigma),\,\UU(\Sigma)$ be the cluster algebra and the upper cluster algebra associated to $\Sigma,$ respectively. 
		Then  $$\mathcal{X}(\Sigma)\subseteq \bigcap_{\mathbf{z}\sim \x} \mathcal{L}_{\mathbf{z},\ZZ}\,,\qquad \text{in particular,}\qquad \A(\Sigma)\subseteq \UU(\Sigma). $$
	\end{theorem}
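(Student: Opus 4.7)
The second inclusion $\A(\Sigma)\subseteq \UU(\Sigma)$ follows at once from the first: $\A(\Sigma)$ is generated over $K$ by $\mathcal{X}(\Sigma)\cup \y\cup \y^{-1}$, and every $\mathcal{L}_{\mathbf{z}}$ already contains $\y\cup \y^{-1}$ by construction. So the plan focuses on the inclusion $\mathcal{X}(\Sigma)\subseteq \bigcap_{\mathbf{z}\sim \x}\mathcal{L}_{\mathbf{z},\ZZ}$. Since $\sim$ is symmetric and $\mathcal{M}(\Sigma)$ depends only on the equivalence class of $\Sigma$, this is equivalent to the following statement: for any fixed seed $\Sigma_0=(\x_0,\y,B_0)$, every cluster variable appearing in any seed of $\mathcal{M}(\Sigma_0)$ lies in $\mathcal{L}_{\x_0,\ZZ}$.

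I would prove this by induction on the length $k$ of a mutation sequence from $\Sigma_0$ to the seed containing the variable in question. The case $k=0$ is immediate from $\x_0\subseteq \mathcal{L}_{\x_0,\ZZ}$. For the inductive step, let $\Sigma_k$ be a seed at distance $k$ with cluster variables $(u_1,\dots,u_n)$ and exchange matrix $(b_{jl})$, and set $\Sigma_{k+1}=\mu_i(\Sigma_k)$. The unique new cluster variable is
$$u'_i=\frac{1}{u_i}\!\left(\prod_{b_{ji}>0}u_j^{b_{ji}}+\prod_{b_{ji}<0}u_j^{-b_{ji}}\right),$$
the products ranging over both exchangeable and frozen indices. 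By the inductive hypothesis each $u_j$ lies in $\mathcal{L}_{\x_0,\ZZ}$, so both $u_i$ and the numerator do as well, and a priori $u'_i\in \mathcal{L}_{\x_0,\ZZ}[u_i^{-1}]$.

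The main difficulty is to upgrade this to $u'_i\in \mathcal{L}_{\x_0,\ZZ}$, i.e.\ to cancel the denominator $u_i$, which is a genuine (non-monomial) Laurent polynomial in $\x_0$. A plain induction on mutation distance cannot detect this cancellation from $\Sigma_k$ alone, because one needs to control $u'_i$ simultaneously in several neighboring clusters. This is precisely the content of the \emph{caterpillar lemma} of Fomin--Zelevinsky: the seeds are arranged along a caterpillar-shaped tree and a rational function which is Laurent in three suitably placed adjacent clusters is shown to be Laurent in the root cluster. The key inputs are that $\mathcal{L}_{\x_0,\ZZ}$ is a UFD (so Laurentness reduces to the absence of poles along each hyperplane $\{x_j=0\}$) together with an explicit coprimality check between the denominator $u_i$ and the exchange binomial, carried out using the mutation rules on the matrix. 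This coprimality verification, combined with the caterpillar combinatorics, is the main technical obstacle; I would follow \cite{FZ02} for its precise execution.
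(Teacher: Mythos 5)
The paper does not prove this theorem; it is imported verbatim from Fomin--Zelevinsky \cite{FZ02}, so there is no in-paper argument to compare against. Your reductions of the easy parts are correct: the inclusion $\A(\Sigma)\subseteq\UU(\Sigma)$ does follow immediately once $\mathcal{X}(\Sigma)\subseteq\bigcap_{\mathbf{z}\sim\x}\mathcal{L}_{\mathbf{z},\ZZ}$ is known, since each $\mathcal{L}_{\mathbf{z}}$ is a $K$-algebra containing $\y^{\pm1}$ and all cluster variables; and the reformulation via a fixed base seed $\Sigma_0$ and induction on mutation distance is the standard (and correct) setup. You also correctly diagnose exactly why the naive induction fails: after one mutation you only get $u_i'\in\mathcal{L}_{\x_0,\ZZ}[u_i^{-1}]$, and the cancellation of the non-monomial denominator $u_i$ cannot be seen from $\Sigma_k$ alone.

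That said, as a self-contained proof your proposal has a genuine gap, and it is precisely the part you name and then defer: the caterpillar lemma together with the coprimality of the relevant exchange Laurent polynomials in the UFD $\mathcal{L}_{\x_0,\ZZ}$ \emph{is} the theorem --- everything else is bookkeeping. Writing ``I would follow \cite{FZ02} for its precise execution'' leaves the entire mathematical content unproved. This is not a wrong approach (it is the approach of the original paper, and the cited source does carry it out), but you should be aware that your text is a roadmap rather than a proof; in particular the coprimality verification is where the hypothesis that exchange matrices are skew-symmetrizable and the specific form of the mutation rule actually enter, and none of that is checked in your sketch. Since the paper itself supplies no more detail than a citation, your proposal is consistent with the paper's treatment, but it should not be mistaken for a complete argument.
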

	
	Given a seed $\Sigma=(\x,\y,B)$, the following inclusions hold \begin{equation}\label{eq:inclusions}\A(\Sigma)\subseteq \UU(\Sigma)\subseteq \bigcap_{i=0}^{n}\mathcal{L}_{\x_i}.\end{equation} In certain instances, the inclusions \eqref{eq:inclusions} turn into equalities, see Examples \ref{ex:1}.
	
	Locally acyclic cluster algebras were introduced by Muller in \cite{M13} to generalize acyclic cluster algebras maintaining some of their properties. They are a much large class than the acyclic cluster algebras, in particular they include cluster algebras arising from marked surfaces. Locally acyclic cluster algebras are finitely generated, noetherian, and integrally closed (\cite[Theorem 4.2]{M13}).  Moreover, the following theorem holds.
	
	\begin{theorem}[{\cite[Theorem 2]{M14}}]
		If $\A$ is locally acyclic, then $\A=\UU$.
	\end{theorem}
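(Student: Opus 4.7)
The plan is to reduce the statement to the acyclic case, for which Berenstein, Fomin, and Zelevinsky already established $\A = \UU$. Local acyclicity means that there exist elements $x_1,\dots,x_k \in \A$ generating the unit ideal such that each localization $\A[x_i^{-1}]$ is naturally an acyclic cluster algebra. My strategy rests on three ingredients: the BFZ equality in the acyclic case; a commutation $\UU(\A[x_i^{-1}]) = \UU(\A)[x_i^{-1}]$; and the elementary descent identity $\A = \bigcap_i \A[x_i^{-1}]$ valid whenever the $x_i$ generate the unit ideal of $\A$.

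The heart of the argument, and the step I expect to be the main obstacle, is proving the \emph{sheaf property} for the upper cluster algebra:
\[
\UU(\A) \;=\; \bigcap_{i=1}^k \UU(\A)[x_i^{-1}].
\]
Since $\UU$ is defined as an intersection of Laurent polynomial rings $\mathcal{L}_{\mathbf{z}}$ over all mutation-equivalent seeds, it is a large intersection of Krull overrings inside the ambient field $\mathcal{F}$. The delicate task is to interchange this intersection with the one coming from the cover; equivalently, to glue an element of $\mathcal{F}$ that lies in every $\UU(\A)[x_i^{-1}]$ into an honest element of $\UU(\A)$. This should follow by testing membership seed-by-seed and then using the fact that each individual $\mathcal{L}_{\mathbf{z}}$ satisfies the descent identity with respect to the $x_i$, since Laurent polynomial rings are themselves finite intersections of DVRs inside $\mathcal{F}$.

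To realize each $\A[x_i^{-1}]$ as an acyclic cluster algebra with compatible upper cluster algebra, I would mutate to a seed containing $x_i$ and then ``freeze'' $x_i$: under the convention that frozen variables are invertible, this yields a new seed with one fewer exchangeable variable, whose cluster algebra is $\A[x_i^{-1}]$ and whose upper cluster algebra is precisely $\UU(\A)[x_i^{-1}]$ (using the sheaf property just discussed, together with the observation that inverting $x_i$ commutes with the intersection defining $\UU$ once $x_i$ is declared frozen). By hypothesis the resulting seed is acyclic. Stitching everything together,
\[
\UU(\A) \;=\; \bigcap_i \UU(\A[x_i^{-1}]) \;=\; \bigcap_i \A[x_i^{-1}] \;=\; \A,
\]
where the first equality is the sheaf property, the second is BFZ applied to each acyclic factor, and the third is the descent identity for $\A$ itself.
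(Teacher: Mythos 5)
The paper does not prove this statement at all: it is quoted verbatim from Muller's work (\cite[Theorem 2]{M14}) as an external input, so there is no internal proof to compare against. Measured against Muller's actual argument, your proposal reconstructs the right strategy — cover by acyclic cluster localizations, invoke the Berenstein--Fomin--Zelevinsky equality on each piece, and glue — and it is correct in outline. Two remarks, though. First, you overestimate where the difficulty lies: you do not need the full ``sheaf property'' $\UU(\A)=\bigcap_i\UU(\A)[x_i^{-1}]$ nor the full commutation $\UU(\A[x_i^{-1}])=\UU(\A)[x_i^{-1}]$. Only the easy inclusions are needed, since the chain
\[
\A\;\subseteq\;\UU(\A)\;\subseteq\;\bigcap_i\UU(\A)[x_i^{-1}]\;\subseteq\;\bigcap_i\UU\bigl(\A[x_i^{-1}]\bigr)\;=\;\bigcap_i\A[x_i^{-1}]\;=\;\A
\]
already closes up: the second inclusion is trivial, the third holds because the seeds of the frozen algebra $\A[x_i^{-1}]$ form a subfamily of the seeds of $\A$ (so its defining intersection is over fewer Laurent rings, each already containing $x_i^{-1}$), the first equality is the acyclic case, and the last is the descent identity. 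The genuinely hard converse inclusions you flag as ``the heart of the argument'' can be bypassed entirely. Second, your justification of the descent identity via ``Laurent polynomial rings are finite intersections of DVRs'' is off: a Laurent polynomial ring in two or more variables is an \emph{infinite} intersection of DVRs, and DVRs are not the point. The identity $R=\bigcap_i R[x_i^{-1}]$ for a domain $R$ in which $x_1,\dots,x_k$ generate the unit ideal follows from the partition-of-unity argument (write $1=\sum_i a_i x_i^{N}$ and clear denominators), and this applies uniformly to $\A$, to $\UU(\A)$, and to each $\mathcal{L}_{\mathbf z}$, since the $x_i$ generate the unit ideal in every $\A$-algebra. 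With those two corrections your argument is the standard one.
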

	Another class for which one of the inclusions becomes an equality is the one of full rank upper cluster algebra. The following result is known as Starfish lemma.
	\begin{theorem}[{\cite[Corollary 1.9]{BFZ05}}]\label{thm:starfish}
		Let $\UU$ be a full rank upper cluster algebra. Then  $$\UU=\bigcap_{i=0}^{n}\mathcal{L}_{\x_i},$$ for all seeds $(\x,\y,B)$.
	\end{theorem}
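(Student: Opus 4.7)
The inclusion $\UU \subseteq \bigcap_{i=0}^{n}\mathcal{L}_{\x_i}$ is immediate from the definition of $\UU$, since the right-hand side is an intersection over a subset of the seeds in $\mathcal{M}(\Sigma)$. The content of the theorem is the reverse inclusion: any $f$ lying in the ``star'' $\mathcal{L}_{\x}\cap\mathcal{L}_{\x_1}\cap\cdots\cap\mathcal{L}_{\x_n}$ must already lie in $\mathcal{L}_{\mathbf{z}}$ for every $\mathbf{z}\sim\x$.

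My plan is to proceed by induction on the distance from $\x$ in the exchange graph. The essential local step is the following characterization: for a fixed seed $\mathbf{z}$ and direction $k$, an element $f\in\mathcal{L}_{\mathbf{z}}$ lies in $\mathcal{L}_{\mu_k(\mathbf{z})}$ if and only if, when $f$ is expanded as a Laurent polynomial in $z_k$ with coefficients in $K[z_j^{\pm 1}:j\neq k,\,y^{\pm 1}]$, every coefficient $c_{-d}$ of $z_k^{-d}$ (for $d\geq 1$) is divisible by $f_k^d$, where $f_k$ is the exchange polynomial at $z_k$. This follows by substituting $z_k=f_k/z_k'$ when passing to $\mu_k(\mathbf{z})$-coordinates and requiring the resulting expression to be regular in $z_k'$.

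The role of full rankness, which I anticipate being the main technical obstacle, is to ensure that the Laurent polynomial ring $\mathcal{L}_{\mathbf{z}}$ (a UFD) contains $2n$ pairwise coprime irreducible elements $z_1,\ldots,z_n,f_1,\ldots,f_n$: full rank of $B(\mathbf{z})$ guarantees that no exchange polynomial $f_k$ is a monomial and that distinct $f_k$'s are non-associate primes, while $z_k$ is automatically coprime to $f_k$ since the latter does not involve $z_k$. These coprimality properties let us translate the divisibility conditions holding at the initial seed into corresponding conditions at any adjacent seed $\mu_j(\x)$: mutating at a direction $j\neq k$ leaves $x_k$ fixed and transforms the exchange polynomial at direction $k$ by a factor that is a unit in $\mathcal{L}_{\x}\cap\mathcal{L}_{\mu_j(\x)}$, so the relevant divisibility relations persist verbatim under mutation. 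Since full rankness itself is preserved under mutation by \cite[Lemma 3.2]{BFZ05}, the inductive step can be iterated along any path in the exchange graph, yielding $f\in\mathcal{L}_{\mathbf{z}}$ for every seed $\mathbf{z}\sim\x$ and completing the proof.
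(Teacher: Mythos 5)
This theorem is not proved in the paper; it is quoted from \cite[Corollary 1.9]{BFZ05}, so your attempt must be measured against the argument there. Your setup is faithful to that argument: the inclusion $\UU\subseteq\bigcap_{i=0}^n\mathcal{L}_{\x_i}$ is indeed trivial, the local membership criterion (that $f\in\mathcal{L}_{\mathbf z}$ lies in $\mathcal{L}_{\mu_k(\mathbf z)}$ iff the coefficient of $z_k^{-d}$ is divisible by $f_k^d$ for all $d\ge 1$) is exactly \cite[Lemma 4.1]{BFZ05}, and the role of full rank is, as you say, to supply a coprimality condition at every seed. (A small correction here: full rank does \emph{not} make the exchange polynomials irreducible, nor prime --- e.g.\ $x^2+y^2$ factors over $\mathbb{C}$; what it gives, and what is actually needed, is that $f_1,\dots,f_n$ are pairwise \emph{coprime} and non-monomial, because no two columns of $B$ are proportional and none is zero.)

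The genuine gap is in your inductive step. You assert that mutating in a direction $j\neq k$ ``transforms the exchange polynomial at direction $k$ by a factor that is a unit in $\mathcal{L}_{\x}\cap\mathcal{L}_{\mu_j(\x)}$, so the relevant divisibility relations persist verbatim.'' This is false whenever $b_{jk}\neq 0$. Take the $A_2$ quiver $1\to 2$ with $f_1=1+x_2$; after mutating at $2$ the exchange polynomial at direction $1$ becomes $1+x_2'=(1+x_1+x_2)/x_2$ in the old coordinates, and $(1+x_1+x_2)/(1+x_2)$ is not a unit of $\mathcal{L}_{\x}\cap\mathcal{L}_{\x_2}$ (the units there are monomials, by the analogue of Theorem \ref{thm:unit}). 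So the divisibility conditions governing membership in the Laurent rings of the new star are genuinely different from those at the old star, and nothing in your sketch bridges them. This is precisely where the substance of \cite{BFZ05} lies: one proves that the \emph{upper bound} $\overline{\UU}(\Sigma)=\bigcap_{i=0}^n\mathcal{L}_{\x_i}$ is invariant under a single mutation of a coprime seed, by reducing to the case of two exchangeable directions and carrying out an explicit computation with the mutated exchange polynomials (\cite[Propositions 4.3--4.7]{BFZ05}); mutation-invariance of $\overline{\UU}$ then gives $\overline{\UU}(\Sigma)=\overline{\UU}(\Sigma')\subseteq\mathcal{L}_{\mathbf z'}$ for every seed $\Sigma'$, which is the desired reverse inclusion. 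Without that rank-2 computation (or a substitute for it), the proof is incomplete at its central point.
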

	
	Assume that $K$ is a field and that $\Sigma$ is an isolated seed, i.e. $\Sigma=\left(\left(x_1,\dots,x_n\right),\emptyset,0\right)$. Then all the inclusions \eqref{eq:inclusions} are equalities. Indeed $$\A(\Sigma)=\UU(\Sigma)=\bigcap_{i=0}^n\mathcal{L}_{\x_i}=K[x_1^{\pm 1},\dots,x_{n}^{\pm 1}].$$  
	\begin{examples}\label{ex:1}
	    \begin{enumerate}
		\item The first example of a cluster algebra for which $\A\ne\UU$ is due to Berenstein, Fomin, and Zelevinsky \cite{BFZ05} and it is the {\em Markov cluster algebra}. It is the cluster algebra with base ring $K=\mathbb Z$ associated to the following quiver $\mathcal{Q}$
		
        \[
        \begin{tikzcd}[row sep=50pt]
            & 1 \ar[dr,bend left=15] \ar[dr, bend right=15] & \\
            3 \ar[ur, bend left=15] \ar[ur, bend right=15] & & 2. \ar[ll, bend left=15] \ar[ll, bend right=15]
        \end{tikzcd}
        \vspace{8pt}
        \] 
        $\A(\mathcal{Q})$ is a $\NN$-graded algebra \cite{M13}, with the degree of all cluster variables equal to $1$. For any cluster $(x_1,x_2,x_3)$,  the element $M=\frac{x_1^2+x_2^2+x_3^2}{x_1x_2x_3}$ is in $\UU(\mathcal{Q})$, but it has graded degree $-1$, so it is not in $\A(\mathcal{Q})$. Moreover, the upper cluster algebra is factorial and it is given by $\ZZ[x_1,x_2,x_3,M]$ (\cite[Proposition 6.7]{M15}).
		
		\smallskip
		
		\item Consider the quiver $A_3=\begin{tikzcd}[column sep=small]
		1 \arrow[r] & 2\arrow[r] & 3
		\end{tikzcd}$ and the element $$s=\frac{1+x_2}{x_1x_3}\in K(x_1,x_2,x_3).$$ It is easy to see that $$s=\frac{x_1'}{x_3}=\frac{x_3'}{x_1}\in \bigcap_{i=0}^3\mathcal{L}_{\x_i}. $$ However,  $(x_1',x_2,x_3')$ is a seed, obtained by mutating $\x$ in direction 3 and then 1, and $s\notin K[{x_1'}^{\pm 1},x_2^{\pm 1},{x_3'}^{\pm 1}].$ Indeed, if we express $s$ in terms of the new seed we obtain $$s=\frac{x_1'x_3'}{1+x_2}$$ which is not a Laurent polynomial in $(x_1',x_2,x_3')$.
		Hence $\UU(A_3)\subsetneq  \bigcap_{i=0}^3\mathcal{L}_{\x_i}$.
	\end{enumerate}
	\end{examples}

	However, there are non-full rank upper cluster algebras for which the Starfish lemma holds. For example, any factorial upper cluster algebra satisfies the Starfish lemma as shown in \cite[Corollary 1.5]{GLS13}. 
 
	\begin{definition}\label{def:starfish}  Let $\UU$ be an upper cluster algebra.
		We say that $\UU$ \defit{satisfies the starfish condition at the seed $(\x,\y\,B)$} if $$\UU=\bigcap_{i=0}^n\mathcal{L}_{\mathbf{x}_i}.$$ 
	\end{definition}
	
	If $\UU$ has full rank, then Theorem \ref{thm:starfish} implies that $\UU$ satisfies the starfish condition at all seeds. We introduced this definition because for our results we don't need the starfish condition at all seeds but just at one of them (see Section \ref{sec3}).
	
	\begin{remark}\label{rmk:isolated}
		Let $\Sigma=(\x,\y,B)$ be a seed. Suppose that $i\in[1,n]$ is \defit{isolated}, i.e., $x_ix_i'=2$. If $K$ is a field, the index $i$ is isolated if and only if $x_i\in K^\times$. Therefore, if we freeze $i$, we obtain an algebra isomorphic to the original one. Hence, from now on, if $K$ is a field we assume without restriction that $[1,n]$ has no isolated indices.
	\end{remark}
	
	\subsection{Factorization Theory}
	In this section, we recall some basic notions of the theory of factorization. For more details see for instance \cite{GH06}. We always assume that our monoids are commutative and cancellative.
		
	\begin{definition}Let $H$ be a monoid.
		\begin{enumerate}
			\item Let $a,b \in H$. We say that $a | b$ if there exists an element $c\in H$ such that $b=ca.$ 
			\item Two elements $a,b\in H$ are \defit{associated} if there exists a unit $\epsilon\in H^\times$ such that $a=\epsilon b.$ In this case we write $a\simeq_H b$.
			\item A non-unit $u\in H$ is an \defit{atom}, or an \defit{irreducible}, if $u=ab$ with $a,b\in H$ implies $a\in H^\times$ or $b\in H^\times$.
			\item An atom $u\in H$ is a \defit{strong atom}, or an \defit{absolutely irreducible}, if for all positive integers $n>1$ the only factorization (up to associates) of $u$ is  $u^n=u\cdots u.$  
			\item A non-unit $p\in H$ is a \defit{prime} if $p|ab$ with $a,b\in H$ implies $p|a$ or $p|b$.
		\end{enumerate}
	\end{definition}
	
	It is well known that
	$$\text{prime}\implies \text{strong atom}\implies \text{atom}.$$
	We will denote by $\A(H)$ the set of atoms of the monoid $H$.
	
	\begin{definition}\label{def:factorial} Let $H$ be a monoid.
		\begin{enumerate}
			\item $H$ is \defit{atomic} if every non-unit of $H$ can be written as a finite product of atoms.
			\item $H$ is a \defit{bounded factorization monoid}, in short \defit{BF-monoid}, if it is atomic and for every non-unit $a\in H$ there exist  $\lambda(a)\in \NN_0$ such that $a=x_1\cdots x_n$ for $x_1,\dots,x_n$ atoms implies $n\le \lambda(a).$
			\item $H$ is a \defit{finite factorization monoid}, in short \defit{FF-monoid}, if it is atomic and every non-unit factors into atoms in only finitely many ways up to order and associates.
			\item $H$ is \defit{factorial}, or \defit{UF-monoid}, if it is atomic and every non-unit of $A$ factors in a unique way up to order and associates.
		\end{enumerate}
	\end{definition}
	
	The connection between the notions in Definition \ref{def:factorial} is described by the following picture:
	$$ \text{factorial monoid}\implies \text{FF-monoid} \implies \text{BF-monoid} \implies \text{atomic}.$$
	Recall these different characterizations of BF-monoids and FF-monoids (see \cite[Proposition 1.3.3 and Proposition 1.5.5]{GH06}).
	
	\begin{itemize}
		\item $H$ is a BF-monoid if and only if there exists a {\rm length function}, i.e., a map $\lambda\colon H\to \NN_0$ such that $\lambda(a)<\lambda(b)$ whenever $a|b$ and $a\not\simeq_H b$.
		\item $H$ is an FF-domain if and only if every $a\in H$ has only finitely many non-associated divisors.
	\end{itemize}
	
	Let $A$ be a domain. We say that $A$ is an \defit{atomic} (resp. \defit{BF}, \defit{FF}, \defit{factorial}) domain if the monoid $A^\bullet$ is atomic (resp. BF, FF, factorial).
	
	\bigskip
	
	We conclude this subsection with two well-known properties of localizations of domains.
	Denote by $\Spec(A)$ the set of all prime ideals of $A$. Recall that a \defit{multiplicative closed subset} of $A$ is a subset $S\subseteq A$ such that $1\in S,\, S\cdot S\subseteq S$, and $0\notin S$. 
	\begin{proposition}\label{prop:loc.ideals}
		Let $A$ be a domain and $S\subseteq A$ a multiplicatively closed set of $A$. There is an inclusion-preserving bijection 
		\begin{align*}
		\{\, \mathfrak{p} \in \Spec (A) \mid \mathfrak{p} \cap S = \emptyset \,\} &\longleftrightarrow \Spec (S^{-1}A),\\
		\mathfrak{p} &\mapsto S^{-1}\mathfrak{p} = \mathfrak{p} (S^{-1}A),\\
		\mathfrak{q} \cap A &\mapsfrom \mathfrak{q}.
		\end{align*}
		In particular, the bijection preserves the height of every prime ideal that does not meet $S$.
	\end{proposition}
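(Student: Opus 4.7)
The statement is a classical result of commutative algebra, and my plan is to verify that the two maps are mutually inverse and inclusion-preserving, then deduce the height assertion formally. I would begin by checking that the extension map is well-defined. Given a prime $\mathfrak{p} \subseteq A$ disjoint from $S$, the ideal $S^{-1}\mathfrak{p}$ is proper in $S^{-1}A$: if $1 \in S^{-1}\mathfrak{p}$, we could write $1 = p/s$ with $p \in \mathfrak{p}$ and $s \in S$, which yields $ts = tp \in \mathfrak{p}$ for some $t \in S$, contradicting $\mathfrak{p} \cap S = \emptyset$. Primality of $S^{-1}\mathfrak{p}$ is then obtained by clearing denominators: from $(a/s)(b/t) \in S^{-1}\mathfrak{p}$ we extract an element $u \in S$ with $uab \in \mathfrak{p}$, and since $u \notin \mathfrak{p}$, primality of $\mathfrak{p}$ forces $a \in \mathfrak{p}$ or $b \in \mathfrak{p}$. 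For the contraction direction, $\mathfrak{q} \cap A$ is prime as the pullback of a prime along the natural map $A \to S^{-1}A$, and it is disjoint from $S$ because every element of $S$ becomes a unit in $S^{-1}A$ and so cannot lie in a proper prime ideal.

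Next I would verify that the two maps invert one another. The identity $S^{-1}\mathfrak{p} \cap A = \mathfrak{p}$ reduces to the following: if $a \in A$ satisfies $a = p/s$ with $p \in \mathfrak{p}$, $s \in S$, then there exists $t \in S$ with $tsa = tp \in \mathfrak{p}$, and since $ts \notin \mathfrak{p}$, primality gives $a \in \mathfrak{p}$. The reverse identity $S^{-1}(\mathfrak{q} \cap A) = \mathfrak{q}$ is simpler: the inclusion $\subseteq$ is immediate, while conversely an element $a/s \in \mathfrak{q}$ satisfies $a = s \cdot (a/s) \in \mathfrak{q} \cap A$, so $a/s$ lies in $S^{-1}(\mathfrak{q} \cap A)$. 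Inclusion-preservation of both maps is clear directly from the formulas.

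For the final assertion about heights, I would argue formally: any strict chain of primes $\mathfrak{p}_0 \subsetneq \mathfrak{p}_1 \subsetneq \cdots \subsetneq \mathfrak{p}_k = \mathfrak{p}$ in $A$ with $\mathfrak{p} \cap S = \emptyset$ automatically satisfies $\mathfrak{p}_i \cap S = \emptyset$ for each $i$, and so lifts via the bijection to a strict chain in $S^{-1}A$ ending at $S^{-1}\mathfrak{p}$. Conversely, any such chain in $S^{-1}A$ contracts to a strict chain in $A$, again by the bijection. Hence $\operatorname{ht}(\mathfrak{p}) = \operatorname{ht}(S^{-1}\mathfrak{p})$. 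There is no substantive obstacle in this proof; the only care required is in the clearing-of-denominators step for primality, where one must remember to absorb the ambient element of $S$ correctly. The hypothesis that $A$ is a domain is not actually used in this proposition and could be dropped, but the stated generality suffices for the paper.
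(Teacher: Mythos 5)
Your proof is correct and complete. Note that the paper itself gives no proof of this proposition: it is stated as one of two ``well-known properties of localizations of domains'' and used as a black box, so there is nothing to compare your argument against. What you have written is the standard textbook verification (extension and contraction are well-defined, mutually inverse, and inclusion-preserving; the height statement follows because a chain of primes below a prime disjoint from $S$ automatically consists of primes disjoint from $S$, so chains correspond bijectively), and every step checks out, including the clearing-of-denominators steps. Your closing observation that the domain hypothesis is superfluous is also accurate; the paper only needs the stated generality.
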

	
	\begin{proposition}\label{lemma:loc.irr}
		Let $A$ be a factorial domain, and $S$ a multiplicatively closed set of $A$. Denote by $T$ the set of all the prime elements of $A$ that divide an element of $S$, and by $M=\mathcal{A}(A)\setminus T.$ 
		Then $S^{-1}T\subseteq (S^{-1}A)^{\times}$ and  $S^{-1}M\subseteq \mathcal{A}(S^{-1}A)$.
	\end{proposition}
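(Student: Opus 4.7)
The plan is to prove both inclusions by direct manipulation of fractions, using factoriality of $A$ only through the fact that in a factorial domain atoms and primes coincide. The first inclusion is essentially a restatement of the definition of $T$; the second requires showing that a prime $q \in M$ cannot be ``hidden'' by $S$, which will follow from $q$ being prime and missing every element of $S$.

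For the first inclusion, I would start with $p \in T$. By definition of $T$ there exist $s \in S$ and $a \in A$ with $s = pa$, so in $S^{-1}A$ we have $(p/1)(a/s) = s/s = 1$, hence $p/1 \in (S^{-1}A)^\times$. Since $1/s'$ is a unit of $S^{-1}A$ for every $s' \in S$, it follows that $p/s' \in (S^{-1}A)^\times$ for all $p \in T$ and $s' \in S$, which gives $S^{-1}T \subseteq (S^{-1}A)^\times$.

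For the second inclusion, take $q \in M$ and $s \in S$. Since $s/1$ is a unit of $S^{-1}A$, we have $q/s \simeq_{S^{-1}A} q/1$, so it suffices to show $q/1$ is an atom of $S^{-1}A$. First I would verify that $q/1$ is not a unit: if $(q/1)(a/s_1) = 1$ in $S^{-1}A$, then $s_2(qa - s_1) = 0$ in $A$ for some $s_2 \in S$, hence $q \mid s_1 s_2 \in S$, contradicting $q \in M$ (recall $q$ is prime because $A$ is factorial). Next, suppose $q/1 = (a/s_1)(b/s_2)$ in $S^{-1}A$; clearing denominators yields $s_3 q s_1 s_2 = s_3 a b$ in $A$ for some $s_3 \in S$. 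Since $q$ is prime and does not divide any element of $S$, we get $q \mid ab$, so without loss of generality $q \mid a$, say $a = q a'$. Cancelling $q$ and then $s_3$ in the integral domain $A$ gives $s_1 s_2 = a' b$, and therefore $(b/s_2)(a'/s_1) = (ba')/(s_1 s_2) = 1$ in $S^{-1}A$, which shows $b/s_2 \in (S^{-1}A)^\times$. Hence $q/1$ is an atom and $S^{-1}M \subseteq \mathcal{A}(S^{-1}A)$.

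There is no real obstacle in this argument; the main subtlety to keep track of is the equality-of-fractions relation, which forces the auxiliary elements $s_2, s_3 \in S$ when one clears denominators, and then using primality of $q$ to conclude that these auxiliary factors cannot absorb $q$.
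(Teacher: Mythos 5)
Your proof is correct: both inclusions are verified by the standard direct computation with fractions, and the key points (that $q/1$ is a non-unit because $q$ divides no element of $S$, and that primality of $q$ lets you absorb the auxiliary denominators $s_3, s_1, s_2$) are all handled properly. The paper states this proposition as a well-known property of localizations and gives no proof of its own, so there is nothing to compare against; your argument is the expected one.
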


	\subsection{Krull Domains}\label{subsec:krull}
	In this section, we recall some basic notions about Krull domains. For more details, see \cite{FOSSUM,GH06}.

	Recall that a \defit{discrete valuation ring}, in short DVR, is a factorial domain $A$ with a unique prime element $p$ (up to associates), that is, every $a \in A$ has a unique factorization of the form $a=up^n$
	with $u\in A^\times$ and $n\in \NN_0$. Equivalently, a domain $A$ is a discrete valuation ring if there exists a discrete valuation $v\colon \mathbf{q}(A)\to \ZZ\cup \{\infty\}$ such that $A= \{\,x\in\mathbf{q}(A)\mid v(x)\ge 0\,\}.$ With this notation, every $a\in A$ can be uniquely written as $a=up^{v(a)}$, with $u\in A^\times$ (see \cite[Theorem 2.3.8]{GH06}).
	\begin{definition}[Krull domains]
		Let $A$ be a domain and let $\mathfrak{X}(A)$ be the set of all height-1 prime ideals of $A$. Then $A$ is a \defit{Krull domain} if 
		\begin{enumerate}
			\item $A_\mathfrak{p}$ is a discrete valuation ring for every $\mathfrak{p}\in \mathfrak{X}(A);$
			\item $A=\bigcap_{\mathfrak{p}\in \mathfrak{X}(A)}A_{\mathfrak{p}}$;
			\item every non-zero element $a\in A$ is contained in at most a finite number of height-1 prime ideals of $A$.
		\end{enumerate}
	\end{definition}
	
	Equivalently, $A$ is a Krull domain if and only if there exists a family \{$v_i\}_{i\in I}$ of discrete valuations on the quotient field $\mathbf{q}(A)$ such that, for all $x\in \mathbf{q}(A)\setminus\{0\}$, 
	\begin{enumerate}
		\item[(i)]  $ v_i(x)= 0$ for all but finitely many $i\in I;$
		\vspace{0.05cm}
		\item[(ii)]  $x\in A$ if and only $v_i(x)\ge 0$ for all $i\in I.$
	\end{enumerate}
	
	\medskip

	Let $A$ be a domain.  For any non-empty subsets $X$,~$Y\subset \mathbf{q}(A)$, we define $$(Y:X)=\{\,a\in \mathbf{q}(A)\mid aX\subset Y\,\},\quad X^{-1}=(A:X),\quad\text{and}\quad X_v=(X^{-1})^{-1} .$$ 
	
	A subset $\mathfrak{c}\subseteq A$ is called a \defit{divisorial ideal} of $A$ if $\mathfrak{c}_v=\mathfrak{c}$ and a subset $\mathfrak{c}\subseteq \mathbf{q}(A)$ is called a \defit{fractional divisorial ideal} of $A$ if there exists some $x\in A^\bullet$ such that $x\mathfrak{c}$ is a divisorial ideal of $A$. We denote by $\mathcal{I}_v(A)$ the set of all divisorial ideals of $A$ and by $\mathcal{F}_v(A)$ the set of all fractional divisorial ideal of $A$. For fractional ideals $\mathfrak{a}$,~$\mathfrak{b}\in \mathcal{F}_v(A)$, define their $v$-product by $\mathfrak{a}\cdot_v \mathfrak{b}=(\mathfrak{a}\mathfrak{b})_v$, and for $k$ fractional divisorial ideals $\mathfrak{a}_1,\dots,\mathfrak{a}_k$ define their $v$-product to be $$\vprod_{i=1}^k\mathfrak{a}_i:=\Bigl(\prod_{i=1}^n\mathfrak{a}_i\Bigr)_v.$$ By convention an empty product is equal to the trivial ideal $A$.
	A fractional divisorial ideal $\mathfrak{a}\in \mathcal{F}_v(A)$ is  \defit{v-invertible} if $\mathfrak{a}\cdot_v\mathfrak{a}^{-1}=A.$ Every non-zero principal fractional ideal $xA$ is invertible with inverse $x^{-1}A$. The group $\mathcal{F}_v(A)^\times$ is the group of $v$-invertible fractional ideals. We have that $(xA)\cdot_v (yA)=xyA,$ hence the subset $\mathcal{H}(A)$ of all non-zero principal fractional ideals is a subgroup of $\mathcal{F}_v(A)^\times.$  So the following definition makes sense.
	\begin{definition}[Class groups]
		Let $A$ be a Krull domain. 
		The \defit{(divisor) class group} of $A$ is the quotient group $\mathcal{C}(A)=\mathcal{F}_v(A)^\times/\mathcal{H}(A).$ 
		We write the group additively. For $\mathfrak{a}\in \mathcal{F}_v (A)^\times$ we denote
			by $[\mathfrak{a}] \in C(A)$ the class containing $\mathfrak{a}$.
	\end{definition}
	
	A domain $A$ is $v$-\defit{noetherian} if it satisfies the ascending chain condition on divisorial ideals.
	An element $x \in \mathbf{q}(A)$ is \defit{almost integral} if there exists $c \in \mathbf{q}(A)^\bullet $ such that $cx^n\in A$ for all $n\ge 0$. A domain $A$ is \defit{completely integrally closed} if $A=\{\,x\in\mathbf{q}(A)\mid x \, \text{almost integral}\,\}$. Every noetherian domain is $v$-noetherian and a noetherian domain is integrally closed if and only if it is completely integrally closed.
	The following theorem is a characterization of Krull domains in terms of multiplicative ideal theory.
	
	\begin{theorem}[{\cite{FOSSUM}}]\label{thm:krull}
		Let $A$ be a domain. Then $A$ is a Krull domain if and only if $A$ is completely integrally closed and $v$-noetherian.
	\end{theorem}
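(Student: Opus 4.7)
The plan is to prove both implications by translating divisorial-ideal data into valuation data. The forward direction is a direct verification using the equivalent characterization of Krull domains by a family of discrete valuations $\{v_\mathfrak{p}\}_{\mathfrak{p}\in \mathfrak{X}(A)}$ recorded right after the definition; the converse requires building from scratch a structure theorem for divisorial ideals out of the two hypotheses.

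For the ($\Rightarrow$) direction, assume $A$ is Krull. To show complete integral closure, take $x\in \mathbf{q}(A)$ almost integral with witness $c\in A^\bullet$, so $cx^n\in A$ for all $n\ge 0$. Applying any $v_\mathfrak{p}$ yields $v_\mathfrak{p}(c)+nv_\mathfrak{p}(x)\ge 0$ for every $n$, which forces $v_\mathfrak{p}(x)\ge 0$ since $v_\mathfrak{p}(c)$ is finite. Hence $x\in \bigcap_\mathfrak{p} A_\mathfrak{p}=A$. For $v$-noetherianity, I would show that every fractional divisorial ideal $\mathfrak{a}$ is determined by its finite-support tuple $(v_\mathfrak{p}(\mathfrak{a}))_\mathfrak{p}$ and that inclusion corresponds to componentwise inequality; an ascending chain then becomes a componentwise-bounded chain of finitely-supported integer tuples and must stabilize.

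For the ($\Leftarrow$) direction, assume $A$ is completely integrally closed and $v$-noetherian. The argument proceeds in four steps. (i) By $v$-noetherianity, every proper divisorial ideal is contained in a \emph{maximal divisorial ideal}. (ii) Each maximal divisorial ideal $\mathfrak{m}$ is $v$-invertible; this is where complete integral closure enters, since if $\mathfrak{m}\cdot_v\mathfrak{m}^{-1}\ne A$ one iterates multiplication by a suitably chosen element of $\mathfrak{m}^{-1}\setminus A$ to produce an almost integral element of $\mathbf{q}(A)\setminus A$, contradicting the hypothesis. (iii) A $v$-invertible maximal divisorial ideal is prime, and a standard induction using $v$-noetherianity decomposes every divisorial ideal uniquely as a $v$-product of maximal divisorial ideals, so $\mathcal{F}_v(A)^\times$ is free abelian on the set of maximal divisorial ideals. (iv) Each maximal divisorial ideal has height one, and the localization of $A$ at it is a DVR whose valuation is the one read off from the corresponding factor of the free decomposition; the three Krull axioms then follow, with finite support of the valuation tuples coming for free from step (iii).

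The main obstacle is step (ii) of the converse: producing an almost integral element in order to force $v$-invertibility of maximal divisorial ideals. This is the delicate point where complete integral closure (as opposed to mere integral closure) is essential, since without it the conclusion genuinely fails. The rest of the argument is a relatively standard multiplicative-ideal-theoretic reorganization once $v$-invertibility is in hand.
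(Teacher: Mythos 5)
The paper does not prove this statement: it is quoted verbatim from Fossum's book (\cite{FOSSUM}) as a known characterization of Krull domains, so there is no internal proof to compare against. Your outline is the standard argument found in \cite{FOSSUM} (and in \cite[Ch.~2]{GH06}) and is correct in its essentials: the forward direction via the defining family of valuations, and the converse via maximal divisorial ideals, their $v$-invertibility, and the resulting free decomposition of $\mathcal{F}_v(A)^\times$. One small point of phrasing in your step (ii): rather than ``iterating multiplication by an element of $\mathfrak{m}^{-1}\setminus A$,'' the clean version is that if $\mathfrak{m}\cdot_v\mathfrak{m}^{-1}\ne A$ then maximality forces $(\mathfrak{m}\mathfrak{m}^{-1})_v=\mathfrak{m}$, so every $x\in\mathfrak{m}^{-1}$ satisfies $x^n\mathfrak{m}\subseteq\mathfrak{m}$ for all $n$ and is almost integral, whence $\mathfrak{m}^{-1}=A$ and $\mathfrak{m}=\mathfrak{m}_v=A$, a contradiction; this avoids having to exhibit the element you invoke (its existence is exactly the fact $\mathfrak{m}^{-1}\supsetneq A$ for a proper divisorial ideal, which deserves a word). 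As a sketch, nothing essential is missing.
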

	
	Dedekind domains are exactly the one-dimension Krull domains \cite[Theorem 2.10.6]{GH06}. In a Dedekind domain every non-zero proper ideal factors into a product of prime ideals. Krull domains generalize this property of Dedekind domains, as shown in the next theorem.
	\begin{theorem}[{\cite[Corollary 3.14]{FOSSUM}}]\label{thm:valkrull}
		Let $A$ be a Krull domain. Then every invertible fractional divisorial ideal $\mathfrak{a}\in \mathcal{F}_v(A)^\times$ has a representation as divisorial product $$\mathfrak{a}=\vprod_{\mathfrak{p}\in \mathfrak{X}(A)} \mathfrak{p}^{n_{\mathfrak{p}}},$$ with uniquely determined $n_{\mathfrak{p}}\in \ZZ,$ almost all of which are 0. We have $a\in \mathcal{I}_v(A)^\bullet$ if and only if $n_{\mathfrak{p}}\ge 0$ for all $\mathfrak{p}\in \mathfrak{X}(A)$.
	\end{theorem}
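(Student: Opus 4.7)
The plan is to exploit the valuation-theoretic characterization of Krull domains recalled just before this statement: for each $\mathfrak{p}\in \mathfrak{X}(A)$ the localization $A_{\mathfrak{p}}$ is a DVR with discrete valuation $v_{\mathfrak{p}}\colon \mathbf{q}(A)^\bullet \to \ZZ$, one has $A = \bigcap_{\mathfrak{p}\in \mathfrak{X}(A)} A_{\mathfrak{p}}$, and every nonzero $x\in \mathbf{q}(A)$ satisfies $v_{\mathfrak{p}}(x) = 0$ for all but finitely many $\mathfrak{p}$. A preliminary step I would establish is that each $\mathfrak{p}\in \mathfrak{X}(A)$ is itself a divisorial ideal and that $\mathfrak{p}A_{\mathfrak{p}}\cap A = \mathfrak{p}$; divisoriality follows from $A\subsetneq \mathfrak{p}^{-1}$, which is visible in the DVR $A_{\mathfrak{p}}$.

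Given $\mathfrak{a}\in \mathcal{F}_v(A)^\times$ and $\mathfrak{p}\in \mathfrak{X}(A)$, the extension $\mathfrak{a} A_{\mathfrak{p}}$ is a nonzero fractional ideal of the DVR $A_{\mathfrak{p}}$ and therefore equals $(\mathfrak{p}A_{\mathfrak{p}})^{n_{\mathfrak{p}}}$ for a uniquely determined $n_{\mathfrak{p}}\in \ZZ$. To see that $n_{\mathfrak{p}}=0$ for almost all $\mathfrak{p}$, I would pick $a\in \mathfrak{a}^\bullet$ and $b\in (\mathfrak{a}^{-1})^\bullet$: the inclusions $aA\subseteq \mathfrak{a}$ and $b\mathfrak{a}\subseteq A$ force $-v_{\mathfrak{p}}(b)\leq n_{\mathfrak{p}} \leq v_{\mathfrak{p}}(a)$, and both bounds vanish for almost all $\mathfrak{p}$.

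The key identity is $\mathfrak{a}=\vprod_{\mathfrak{p}\in \mathfrak{X}(A)}\mathfrak{p}^{n_{\mathfrak{p}}}$. I would prove it by showing that both sides coincide with $\bigcap_{\mathfrak{p}} \mathfrak{a}A_{\mathfrak{p}}$. Since $\mathfrak{a}$ is a divisorial ideal of the Krull domain $A$, the equality $\mathfrak{a}=\bigcap_{\mathfrak{p}}\mathfrak{a}A_{\mathfrak{p}}$ follows from $A=\bigcap A_{\mathfrak{p}}$ after clearing denominators via a common multiplier $d\in A^\bullet$ with $d\mathfrak{a}\subseteq A$. On the other hand, since only finitely many $n_{\mathfrak{p}}$ are nonzero, the $v$-product reduces to a finite $v$-product $\vprod_{\mathfrak{p}\in F}\mathfrak{p}^{n_{\mathfrak{p}}}$; localizing at an arbitrary height-1 prime $\mathfrak{q}$ and using that $\mathfrak{p}A_{\mathfrak{q}}=A_{\mathfrak{q}}$ for $\mathfrak{p}\ne \mathfrak{q}$ yields $(\vprod_{\mathfrak{p}}\mathfrak{p}^{n_{\mathfrak{p}}})A_{\mathfrak{q}}=(\mathfrak{q}A_{\mathfrak{q}})^{n_{\mathfrak{q}}}=\mathfrak{a}A_{\mathfrak{q}}$, whence the two sides agree after intersecting over $\mathfrak{q}$. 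Uniqueness is then immediate by localization: a DVR distinguishes $(\mathfrak{q}A_{\mathfrak{q}})^n$ for different $n\in \ZZ$, so the tuple $(n_{\mathfrak{p}})$ is recovered from $\mathfrak{a}$.

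The last assertion is then routine: $\mathfrak{a}\in \mathcal{I}_v(A)^\bullet$ iff $\mathfrak{a}\subseteq A$ iff $\mathfrak{a}A_{\mathfrak{p}}\subseteq A_{\mathfrak{p}}$ for every $\mathfrak{p}\in\mathfrak{X}(A)$ (using $A=\bigcap A_{\mathfrak{p}}$ once more), which is precisely $n_{\mathfrak{p}}\geq 0$. The main obstacle I anticipate is verifying that the $v$-product is well-behaved under localization at height-1 primes, in particular the compatibility $(\mathfrak{a}\cdot_v \mathfrak{b})A_{\mathfrak{p}}=(\mathfrak{a}A_{\mathfrak{p}})(\mathfrak{b}A_{\mathfrak{p}})$; this reduces to the fact that in the DVR $A_{\mathfrak{p}}$ every nonzero fractional ideal is already divisorial, so that $v$-closure becomes trivial after localization and the product formula follows from the usual behavior of extension of ideals under localization.
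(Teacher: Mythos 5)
This statement is quoted from Fossum's book (\cite[Corollary 3.14]{FOSSUM}); the paper supplies no proof of its own, so there is nothing to compare against except the standard literature argument --- which is exactly what you have reconstructed. Your outline is correct and complete in its essentials: localizing at each $\mathfrak{p}\in\mathfrak{X}(A)$ to define $n_{\mathfrak{p}}$, squeezing $n_{\mathfrak{p}}$ between $-v_{\mathfrak{p}}(b)$ and $v_{\mathfrak{p}}(a)$ to get almost-all-zero, identifying both sides of the product formula with $\bigcap_{\mathfrak{p}}\mathfrak{a}A_{\mathfrak{p}}$, and reading off uniqueness and integrality locally. The one place where your justification is thinner than the claim it supports is the identity $\mathfrak{a}=\bigcap_{\mathfrak{p}\in\mathfrak{X}(A)}\mathfrak{a}A_{\mathfrak{p}}$ for a divisorial $\mathfrak{a}$: this does not follow from $A=\bigcap_{\mathfrak{p}}A_{\mathfrak{p}}$ by ``clearing denominators,'' since for a non-divisorial integral ideal $I$ one generally has $I\subsetneq\bigcap_{\mathfrak{p}}IA_{\mathfrak{p}}$ (the intersection computes $I_v$). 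The correct mechanism is that a divisorial fractional ideal is an intersection of principal fractional ideals, $\mathfrak{a}=(\mathfrak{a}^{-1})^{-1}=\bigcap_{b\in(\mathfrak{a}^{-1})^{\bullet}}b^{-1}A$, to each of which the principal case $b^{-1}A=\bigcap_{\mathfrak{p}}b^{-1}A_{\mathfrak{p}}$ applies; the chain $\mathfrak{a}\subseteq\bigcap_{\mathfrak{p}}\mathfrak{a}A_{\mathfrak{p}}\subseteq\bigcap_{b}b^{-1}A=\mathfrak{a}$ then closes the argument. The same lemma is what underwrites your final compatibility $(\mathfrak{a}\cdot_v\mathfrak{b})A_{\mathfrak{p}}=(\mathfrak{a}A_{\mathfrak{p}})(\mathfrak{b}A_{\mathfrak{p}})$, via $(X_v)A_{\mathfrak{p}}=XA_{\mathfrak{p}}$. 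With that one repair your proof is the standard one.
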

	For $\mathfrak{a}\in \mathcal{F}_v(A)^\times$ we define the $\mathfrak{p}$-\defit{adic valuation} of $\mathfrak{a}$ as $v_{\mathfrak{p}}(\mathfrak{a})=n_{\mathfrak{p}}$ with $n_{\mathfrak{p}}$ as in the previous theorem.
	For $x\in \mathbf{q}(A)$ one has $x\in A$ if and only if $v_{\mathfrak{p}}(x)=v_{\mathfrak{p}}(xA)\ge 0$ for every $\mathfrak{p}\in \mathfrak{X}(A)$.
	
	Krull domains theory and factorization theory are intimately related by the following theorem.
	
	\begin{theorem}[\cite{GH06}]
		Let $A$ be a domain. Then the following are equivalent:
		\vspace{0.1cm}
		\begin{enumerate}[label=\textup(\alph*\textup)]
			\item $A$ is factorial;
			\item $A$ is atomic and every atom is a prime element;
			\item $A$ is a Krull domain and $\mathcal{C}(A)=0$.
		\end{enumerate}  
	\end{theorem}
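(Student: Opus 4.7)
The plan is to prove the equivalences through the two loops (a)$\Rightarrow$(b)$\Rightarrow$(a) and (a)$\Rightarrow$(c)$\Rightarrow$(a); the first loop is purely monoid-theoretic, while the second loop leverages the valuation-theoretic description of Krull domains recalled in Subsection~\ref{subsec:krull} together with Theorem~\ref{thm:valkrull}.

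For (a)$\Rightarrow$(b), suppose $A$ is factorial and let $u$ be an atom dividing $ab$; writing $ab = uc$ and factoring $a$, $b$, $c$ into atoms, uniqueness of factorizations forces $u$ to be associated with an atomic factor of $a$ or of $b$, so $u \mid a$ or $u \mid b$. For (b)$\Rightarrow$(a), given two atomic factorizations $a = p_1 \cdots p_r = q_1 \cdots q_s$, primeness of $p_1$ yields $p_1 \mid q_j$ for some $j$, and since $q_j$ is itself an atom we get $p_1 \simeq q_j$; cancelling and iterating produces uniqueness.

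For (a)$\Rightarrow$(c), I would exhibit a family of discrete valuations satisfying conditions (i) and (ii) of the valuation characterization of Krull domains. For each associate class $[p]$ of prime elements, set $v_p\colon \mathbf{q}(A)^\bullet \to \ZZ$ to be the exponent of $p$ in the essentially unique factorization of $a \in A^\bullet$, extended via $v_p(a/b) = v_p(a) - v_p(b)$; factoriality guarantees each $v_p$ is well defined and discrete, (i) follows from finiteness of atomic factorizations, and (ii) is immediate. For the class group, I would verify that $[p] \mapsto pA$ is a bijection between associate classes of primes and $\mathfrak{X}(A)$: each $pA$ is a height-$1$ prime, and conversely any $\mathfrak{p} \in \mathfrak{X}(A)$ contains a nonzero $a$, whose prime factorization together with primality of $\mathfrak{p}$ and height-minimality forces $\mathfrak{p} = p_i A$ for one of its prime factors. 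Hence every element of $\mathfrak{X}(A)$ is principal, and by Theorem~\ref{thm:valkrull} every fractional divisorial ideal is principal, giving $\mathcal{C}(A) = 0$.

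For (c)$\Rightarrow$(a), Theorem~\ref{thm:valkrull} assigns to each $a \in A^\bullet$ a unique representation $aA = \vprod_{\mathfrak{p} \in \mathfrak{X}(A)} \mathfrak{p}^{n_\mathfrak{p}}$ with $n_\mathfrak{p} \in \NN_0$ almost all zero. Triviality of $\mathcal{C}(A)$ provides generators $\mathfrak{p} = p_\mathfrak{p} A$, each of which is a prime element since $A/p_\mathfrak{p} A = A/\mathfrak{p}$ is a domain. Thus $a = u \prod_\mathfrak{p} p_\mathfrak{p}^{n_\mathfrak{p}}$ for some $u \in A^\times$, and uniqueness of the exponents $n_\mathfrak{p}$ in Theorem~\ref{thm:valkrull} translates directly into uniqueness of this factorization up to order and associates, yielding (a). The main obstacle is the bookkeeping in (a)$\Rightarrow$(c), namely the identification of $\mathfrak{X}(A)$ with the associate classes of prime elements of $A$; once that is in place, the rest amounts to routine translation between prime-element factorizations and divisorial factorizations.
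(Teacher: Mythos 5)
The paper does not prove this statement: it is quoted verbatim from \cite{GH06} as a known characterization, so there is no internal argument to compare yours against. Your proof is essentially the standard one and is correct. Both loops work: (a)$\Rightarrow$(b)$\Rightarrow$(a) is the classical monoid-theoretic argument, and (a)$\Rightarrow$(c) correctly exhibits the $p$-adic valuations attached to associate classes of primes and identifies $\mathfrak{X}(A)$ with the principal ideals $pA$, from which $\mathcal{C}(A)=0$ follows via Theorem~\ref{thm:valkrull}. The only place worth tightening is the last step of (c)$\Rightarrow$(a): the uniqueness of the exponents $n_{\mathfrak{p}}$ in Theorem~\ref{thm:valkrull} gives uniqueness only among factorizations into the chosen generators $p_{\mathfrak{p}}$, not a priori among arbitrary atomic factorizations. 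The quickest repair is to observe that your construction shows every nonzero nonunit is a product of \emph{prime} elements; hence $A$ is atomic, and any atom, dividing such a product, is associated to one of the primes and is therefore itself prime, so condition (b) holds and your already-proved implication (b)$\Rightarrow$(a) closes the loop. With that one-line remark added, the argument is complete.
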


	 Krull domains have the following important property, which we will use later to prove our main theorem (cfr. Section \ref{sec3}). 
	
	\begin{theorem}[Approximation property, \cite{FOSSUM}]\label{thm:approx}
		Let $A$ be a Krull domain. For all $n\in \NN$, pairwise distinct $\mathfrak{p}_1,\dots,\mathfrak{p}_n \in \mathfrak{X}(A)$ and integers $e_1,\dots,e_n\in\ZZ$, there exists an element $x\in \mathbf{q}(A)$ such that $v_{\mathfrak{p}_i}(x)=e_i$ and $v_\mathfrak{p}(x)\ge 0$ for every $\mathfrak{p}\in \mathfrak{X}(A)\setminus\{\mathfrak{p}_1,\dots,\mathfrak{p}_n\}.$
	\end{theorem}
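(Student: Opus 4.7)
The plan is to build $x$ in two stages: first produce an element $y \in \mathbf{q}(A)$ with $v_{\mathfrak{p}_i}(y) = e_i$ for every $i$ (accepting that $y$ may have negative valuation at finitely many other height-one primes), then correct $y$ by multiplying with a suitable $w \in A$ that is a unit at every $\mathfrak{p}_i$ but has large enough valuation at the ``bad'' primes to restore nonnegativity.

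For the first stage, I would localize at $S = A \setminus \bigcup_{i=1}^n \mathfrak{p}_i$. By Proposition~\ref{prop:loc.ideals}, the nonzero primes of $R = S^{-1}A$ are exactly $\mathfrak{p}_1 R, \dots, \mathfrak{p}_n R$, all of height one. Since localizations of Krull domains are again Krull (immediate from the intersection-of-DVRs description), $R$ is a one-dimensional semi-local Krull domain, hence a semi-local Dedekind domain and therefore a PID (via a short CRT argument that produces a principal generator of each maximal ideal). Choosing a uniformizer $\pi_i$ of $\mathfrak{p}_i R$, the element $y = \prod_{i=1}^n \pi_i^{e_i} \in \mathbf{q}(R) = \mathbf{q}(A)$ then satisfies $v_{\mathfrak{p}_i}(y) = e_i$ for all $i$, because $A_{\mathfrak{p}_i} = R_{\mathfrak{p}_i R}$ as DVRs, so the two valuations agree on the quotient field.

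For the second stage, the set $T = \{\,\mathfrak{q} \in \mathfrak{X}(A) \setminus \{\mathfrak{p}_1, \dots, \mathfrak{p}_n\} : v_{\mathfrak{q}}(y) < 0\,\}$ is finite, since any element of $\mathbf{q}(A)^\bullet$ has nonzero valuation at only finitely many height-one primes. For each $\mathfrak{q} \in T$, the primes $\mathfrak{q}, \mathfrak{p}_1, \dots, \mathfrak{p}_n$ are pairwise incomparable (all of height one), so prime avoidance yields $w_{\mathfrak{q}} \in \mathfrak{q} \setminus \bigcup_{i} \mathfrak{p}_i$. Setting $w = \prod_{\mathfrak{q} \in T} w_{\mathfrak{q}}^{-v_{\mathfrak{q}}(y)} \in A$ and $x = yw$, a case analysis of each valuation gives: $v_{\mathfrak{p}_i}(x) = e_i$ (since $v_{\mathfrak{p}_i}(w) = 0$ by the choice of the $w_{\mathfrak{q}}$); $v_{\mathfrak{q}}(x) \geq v_{\mathfrak{q}}(y) + (-v_{\mathfrak{q}}(y)) = 0$ for $\mathfrak{q} \in T$; and $v_{\mathfrak{q}}(x) \geq 0$ for any other $\mathfrak{q} \in \mathfrak{X}(A) \setminus \{\mathfrak{p}_1,\dots,\mathfrak{p}_n\}$, because both $y$ (by the definition of $T$) and $w$ (since $w \in A$) already have nonnegative valuation there.

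The main obstacle is the first stage: identifying $R$ as a PID must not implicitly invoke the very approximation theorem being proved. The route ``Krull localization $\Rightarrow$ one-dimensional Krull $\Rightarrow$ Dedekind $\Rightarrow$ PID (when semi-local)'' handles this cleanly, as each implication has a standard proof independent of the statement at hand. Once $R$ is a PID, the remainder is a combinatorial use of prime avoidance together with the finite-support property of divisors in a Krull domain.
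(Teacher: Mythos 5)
Your argument is correct. Note that the paper itself offers no proof of this statement: it is quoted from Fossum's book on Krull domains, so there is nothing internal to compare against. What you have written is essentially the classical proof (close to the one in Bourbaki/Fossum): reduce to the semilocal Krull domain $R=S^{-1}A$ with $S=A\setminus\bigcup_i\mathfrak{p}_i$, whose only nonzero primes are the $\mathfrak{p}_iR$ by prime avoidance and the height-one hypothesis, identify $R$ as a semilocal Dedekind domain and hence a PID, take $y=\prod_i\pi_i^{e_i}$, and then clear the finitely many unwanted negative valuations by multiplying with a product of elements chosen in each bad prime but outside $\bigcup_i\mathfrak{p}_i$. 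All the auxiliary facts you invoke (localizations of Krull domains are Krull; one-dimensional Krull domains are Dedekind, which the paper itself records as \cite[Theorem 2.10.6]{GH06}; semilocal Dedekind domains are PIDs via CRT) have standard proofs that do not presuppose the approximation theorem, so there is no circularity; you were right to flag this as the only delicate point. The valuation bookkeeping in the second stage ($v_{\mathfrak{p}_i}(w)=0$ because each $w_{\mathfrak{q}}$ avoids every $\mathfrak{p}_i$, and $v_{\mathfrak{q}}(w)\ge -v_{\mathfrak{q}}(y)$ because $v_{\mathfrak{q}}(w_{\mathfrak{q}})\ge 1$ while the remaining factors contribute nonnegatively) checks out, and the conclusion $v_{\mathfrak{p}_i}(x)=e_i$ with $v_{\mathfrak{p}}(x)\ge 0$ elsewhere follows as you state.
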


	\section{Factorization theory in upper cluster algebras}\label{sec2}
	In this section, we prove that every cluster variable is a strong atom and that every (upper) cluster algebra is an FF-domain.
	Recall the following.
	
	\begin{theorem}[\cite{CKQ22}]\label{thm:unit}
		Let $\Sigma=(\x,\y,B) $ be a seed. Let $\A=\A(\Sigma)$,~$\UU=\UU(\Sigma)$ be the cluster algebra and upper cluster algebra associated to $\Sigma$, respectively. Then
		\begin{enumerate}
			\item \begin{enumerate}
				\item $\A^\times=\{\,\lambda x_{n+1}^{a_{n+1}}\dots x_{n+m}^{a_{n+m}}\,\mid \, \lambda\in K^\times,\, a_i\in \ZZ\,\}$;
				\item every cluster variable is an atom of $\A$ \textup{(}pairwise non-associated\textup{)}.
			\end{enumerate}
			\item \begin{enumerate}
				\item $\UU^\times=\{\,\lambda x_{n+1}^{a_{n+1}}\dots x_{n+m}^{a_{n+m}}\,\mid \, \lambda\in K^\times,\, a_i\in \ZZ\,\}$;
				\item every cluster variable is an atom in $\UU$ \textup{(}pairwise non-associated\textup{)}.
			\end{enumerate}
		\end{enumerate}
	\end{theorem}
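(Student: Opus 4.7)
The plan is to treat $\A$ and $\UU$ in parallel, using only the chain $\A\subseteq\UU\subseteq\mathcal{L}_{\x}$ valid in every cluster of the mutation class. For the unit descriptions (1)(a) and (2)(a), let $a$ be a unit. Then $a$ and $a^{-1}$ lie in $\mathcal{L}_{\x}$ for every $\x$, so $a$ must be a Laurent monomial $\lambda\prod x_k^{a_k}$ in each such cluster. Comparing its expressions in $\x$ and in the one-step mutation $\mu_i(\x)$, and substituting $x_i' = f_i/x_i$, one obtains an equation of the form
\[
\lambda\, x_i^{a_i+e_i}\prod_{k\neq i} x_k^{a_k-e_k} \;=\; \lambda'\, f_i^{e_i}.
\]
The no-isolated-vertices convention ensures that $f_i$ is a genuine binomial with two distinct monomial summands, so $f_i^{e_i}$ is a Laurent monomial only when $e_i=0$, which then forces $a_i = 0$. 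Iterating over $i \in [1,n]$ gives $a = \lambda\prod_{j>n}x_j^{a_j}$; the reverse inclusion is immediate since frozens are invertible.

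For the atom claims (1)(b) and (2)(b), fix a cluster variable $x$ and a seed $\x$ with $x = x_i$, and suppose $x = ab$ with $a, b \in \A$ (resp.\ $\UU$). Since $\mathcal{L}_{\x}$ is a UFD in which $x_i$ is prime, after swapping we may take $a = \lambda\prod x_k^{a_k}$ to be a Laurent monomial in $\x$, so that $b = \lambda^{-1}x_i^{1-a_i}\prod_{k\neq i}x_k^{-a_k}$. The same binomial-substitution argument, applied now to both $a$ and $b$ as elements of $\mathcal{L}_{\mu_j(\x)}$ for every $j \in [1,n]$, forces their exponents on exchangeable variables to be non-negative: from $a$ one extracts $a_k\ge 0$ for $k\in[1,n]$, and from $b$ one extracts $-a_k \ge 0$ for $k \neq i$ together with $1-a_i\ge 0$. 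Hence $a_k = 0$ for $k \in [1,n]\setminus\{i\}$ and $a_i \in \{0,1\}$; in either case one of $a, b$ reduces to a monomial in the frozens and is therefore a unit by the first part. For pairwise non-association, if two cluster variables satisfy $x' = \epsilon x$ with $\epsilon \in \A^\times$, the Laurent expansion of $x'$ in the seed containing $x = x_i$ is the single monomial $\lambda x_i \prod_{j>n}x_j^{a_j}$, and one concludes $x' = x_i$ using that cluster variables are distinguished in any fixed cluster by their denominator vectors.

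\noindent\textbf{Main obstacle.} Parts (1)(a), (2)(a), and both atomicity claims reduce cleanly to the observation that each $f_i$ is a genuine binomial; the conclusions then fall out of the mutation formula. The subtler point is the pairwise non-association: ruling out that a distinct cluster variable collapses to a single Laurent monomial in a fixed cluster requires a structural invariant of cluster variables (denominator vectors, $g$-vectors, or positivity of Laurent expansions) that is not contained in the mutation rule itself.
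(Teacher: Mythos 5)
First, a point of comparison: the paper does not prove this statement at all --- it is quoted verbatim from \cite{CKQ22} (the text says ``Recall the following''), so there is no internal proof to measure you against. Judged on its own merits, your argument for (1)(a), (2)(a) and the atom claims follows the standard route in the literature (Geiss--Leclerc--Schr\"oer and Cao--Keller--Qin): units and potential factors of $x_i$ are Laurent monomials in every cluster, and comparing expansions across a one-step mutation via $x_i'=f_i/x_i$ kills negative exponents on exchangeable variables. This is sound, and your derivation that $a_k\ge 0$ from membership in $\mathcal{L}_{\mu_k(\x)}$ is exactly the content of the paper's Lemma \ref{lemma:monomials}. One local misstatement: in the atom argument you justify ``$a$ is a Laurent monomial'' by saying $x_i$ is \emph{prime} in $\mathcal{L}_{\x}$; in fact $x_i$ is a \emph{unit} there, which is what you actually need --- $ab=x_i\in\mathcal{L}_{\x}^\times$ forces both $a$ and $b$ to be units of the Laurent ring, hence scalar multiples of monomials, with no ``after swapping'' required. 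Also note that the no-isolated-vertices convention is only imposed when $K$ is a field; over $K=\ZZ$ an isolated index gives $f_i=2$, which is not a binomial, and your unit argument then needs the extra (easy) observation that $2\notin\ZZ^\times$ to conclude $e_i=0$.

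The genuine gap is the one you flagged yourself: pairwise non-association. Your reduction is correct --- if $x'\simeq x_i$ then the Laurent expansion of $x'$ in the seed containing $x_i$ is the single monomial $\lambda x_i\prod_{j>n}x_j^{a_j}$ --- but the phrase ``cluster variables are distinguished in any fixed cluster by their denominator vectors'' does not close it. Distinguishing variables \emph{within} one cluster is just algebraic independence; the case you must exclude is a cluster variable $x'$ \emph{not} in the cluster $\x$ whose expansion in $\x$ degenerates to a monomial with denominator vector $-e_i$. Ruling that out amounts to knowing that non-initial cluster variables have non-negative (and nonzero) denominator vectors, which is a structural theorem about cluster algebras (tied to positivity and sign-coherence), not something extractable from the mutation rule or from the binomial-substitution trick you use elsewhere. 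As written, this step is an appeal to an unproved external fact, so the ``pairwise non-associated'' clauses of (1)(b) and (2)(b) are not established by your argument; you would need to either cite the relevant denominator-vector or positivity result explicitly, or find an argument in the spirit of the paper's Proposition \ref{prop:divclosclusters} (which pins down the divisor-closed submonoid generated by $x_i$, but by itself does not exclude a second cluster variable lying in it).
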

    From the existing literature (\cite[Corollary 4.2]{GLS13}, \cite[Corollary 1.23]{GELS19}, and \cite[Theorem 4.9]{CKQ22}), we obtain the following characterization of factorial (upper) cluster algebras.
    We remark the directions \ref{fact:oneprime}$\,\Rightarrow$\ref{fact:fact} (or \ref{fact:allprime}$\,\Rightarrow\,$\ref{fact:fact}) actually make use of the fact that (upper) cluster algebras are atomic, see Remark \ref{rem:atomic} below.
 
	\begin{proposition}\label{prop:nagata}
		Let $\Sigma=(\x,\y,B) $ be a seed. Let $H$ be either $\A(\Sigma)$ or $\UU(\Sigma)$. Then the following are equivalent:
		\begin{enumerate}[label=\textup{(}\alph*\textup{)}]
			\item\label{fact:fact} $H$ is factorial,
			\item\label{fact:allprime}  every cluster variable is a prime of $H$,
			\item\label{fact:oneprime} the variables $x_1,\dots,x_n$ are primes of $H$,
		\end{enumerate}
		If $H=\UU$ and $H$ is factorial, then $H=\bigcap_{i=0}^n\mathcal{L}_{\x_i}$.
	\end{proposition}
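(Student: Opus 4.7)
\emph{Plan.} The implications (a)$\,\Rightarrow\,$(b)$\,\Rightarrow\,$(c) are essentially immediate: in any factorial monoid atoms are prime, and by Theorem~\ref{thm:unit} every cluster variable---in particular each initial $x_i$---is an atom of $H$. The substantive content lies in (c)$\,\Rightarrow\,$(a) together with the starfish conclusion for $H=\UU$.

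For (c)$\,\Rightarrow\,$(a) I would apply Nagata's factoriality criterion to the multiplicative set $T\subseteq H$ generated by the primes $x_1,\dots,x_n$. A routine check identifies $T^{-1}H$ with $\mathcal{L}_{\x}$: the inclusion~\eqref{eq:inclusions} gives $T^{-1}H\subseteq\mathcal{L}_{\x}$, and conversely all Laurent generators $x_i^{\pm 1}$ and $y^{\pm 1}$ (frozen variables being units in $H$ by Theorem~\ref{thm:unit}) already lie in $T^{-1}H$. Since $\mathcal{L}_{\x}$ is a Laurent polynomial ring over $K$, it is factorial; and $H$ is atomic---atomicity of (upper) cluster algebras is Theorem~\ref{prop:FF}, cf.\ Remark~\ref{rem:atomic}---so Nagata's criterion forces $H$ to be factorial.

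For the final assertion, suppose $\UU$ is factorial and let $z\in\bigcap_{i=0}^n\mathcal{L}_{\x_i}$. Only the inclusion $\bigcap_{i=0}^n\mathcal{L}_{\x_i}\subseteq\UU$ requires attention, as the reverse is part of~\eqref{eq:inclusions}. Using $\mathcal{L}_{\x}=\UU[x_1^{-1},\dots,x_n^{-1}]$ and the primality of the $x_i$ in $\UU$, one may write $z=w/(x_1^{b_1}\cdots x_n^{b_n})$ with $w\in\UU$, $b_j\in\NN_0$, and $x_j\nmid w$ in $\UU$ whenever $b_j>0$. Assume for contradiction that $b_j>0$ for some $j$. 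In $\mathcal{L}_{\x_j}$ each $x_k$ with $k\neq j$ is a unit, so $w/x_j^{b_j}\in\mathcal{L}_{\x_j}$. Now $\mathcal{L}_{\x_j}$ is the localization of $\UU$ at the multiplicative set generated by the primes $\{x_k : k\neq j\}\cup\{x_j'\}$, none associated to $x_j$; Proposition~\ref{prop:loc.ideals} therefore identifies $(x_j)\mathcal{L}_{\x_j}$ with a height-one prime of $\mathcal{L}_{\x_j}$ lying above $(x_j)\UU$, and the localizations $\UU_{(x_j)}$ and $(\mathcal{L}_{\x_j})_{(x_j)\mathcal{L}_{\x_j}}$ coincide. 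Hence $x_j^{b_j}\mid w$ in $\UU$, contradicting the minimal choice of $b_j$.

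The main delicate step is this last divisibility transfer: it is essential that each $x_k$ ($k\neq j$) and $x_j'$ used to localize $\UU$ into $\mathcal{L}_{\x_j}$ be distinct from $x_j$ as primes of $\UU$, so that the $x_j$-adic valuation is preserved; this distinctness is supplied by hypothesis~(c) combined with Theorem~\ref{thm:unit}.
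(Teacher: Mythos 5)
Your proof is correct. Note that the paper does not actually write out a proof of this proposition: it derives it from the literature (\cite[Corollary 4.2]{GLS13}, \cite[Corollary 1.23]{GELS19}, \cite[Theorem 4.9]{CKQ22}), only flagging in Remark~\ref{rem:atomic} that the direction \ref{fact:oneprime}$\,\Rightarrow\,$\ref{fact:fact} rests on Nagata's criterion and therefore needs atomicity, which Theorem~\ref{prop:FF} supplies. Your argument for \ref{fact:oneprime}$\,\Rightarrow\,$\ref{fact:fact} --- localize at the multiplicative set generated by the primes $x_1,\dots,x_n$, identify the localization with the factorial ring $\mathcal{L}_{\x}$ via the Laurent phenomenon, and invoke Nagata together with atomicity --- is exactly the mechanism the paper points to, so on the equivalences you are on the paper's intended route. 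For the final starfish assertion the paper simply cites \cite[Corollary 1.5]{GLS13}, whereas you give a self-contained localization argument: writing $z\in\bigcap_{i=0}^n\mathcal{L}_{\x_i}$ in lowest terms over $\UU$ and using that $x_j$ stays prime in $\mathcal{L}_{\x_j}$ (since the set inverted there consists of primes non-associated to $x_j$, by Theorem~\ref{thm:unit} and \ref{fact:allprime}) to transfer $x_j$-divisibility back to $\UU$. This is a clean and correct replacement for the citation, and it makes the dependence on pairwise non-associatedness of the cluster variables explicit, which the paper leaves implicit.
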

	
	To show that cluster variables are not just atoms, but even strong atoms, we need the following.
	
	\begin{lemma}[{\cite[Lemma 5.2]{CKQ22}}]\label{lemma:monomials}
		Let $\UU$ be an upper cluster algebra and $(\x,\y,B)$ be a seed. If $x_1^{a_1}\ldots x_n^{a_n}\in \UU$ for some $a_1,\dots,a_n\in \ZZ$, then $a_1,\dots,a_n\in \NN_0$. 
	\end{lemma}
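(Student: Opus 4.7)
The plan is to fix an index $k \in [1,n]$ and prove $a_k \ge 0$; applied to each $k$ in turn this gives the lemma. Set $m := x_1^{a_1}\cdots x_n^{a_n}$. Since $m \in \UU \subseteq \mathcal{L}_{\x_k}$ by definition of the upper cluster algebra, $m$ is a Laurent polynomial in the mutated cluster $\x_k = (x_1, \ldots, x_k', \ldots, x_n)$ together with the frozen variables $\y$.

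To exploit this, I rewrite $m$ in the ambient field using the identity $x_k = f_k/x_k'$, where $f_k = x_k x_k'$ is the exchange polynomial at $k$:
\[ m \;=\; \Bigl(\prod_{i\ne k} x_i^{a_i}\Bigr)\, f_k^{a_k}\, (x_k')^{-a_k}. \]
Assuming for contradiction that $a_k < 0$, I clear the denominator $f_k^{|a_k|}$ to obtain
\[ m \cdot f_k^{|a_k|} \;=\; \Bigl(\prod_{i\ne k} x_i^{a_i}\Bigr) (x_k')^{|a_k|}, \]
which is a Laurent monomial in $\x_k \cup \y$, and therefore a unit of $\mathcal{L}_{\x_k}$.

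The key step is now to turn this into a contradiction. The ring $\mathcal{L}_{\x_k}$ is a Laurent polynomial ring over the UFD $K$ (either a field of characteristic zero or $\ZZ$), hence is itself a UFD, with units equal to $K^\times$ times the Laurent monomials in the cluster and frozen variables. Since $m\in\mathcal{L}_{\x_k}$ and $m\cdot f_k^{|a_k|}$ is a unit, $f_k$ must itself be a unit of $\mathcal{L}_{\x_k}$; in particular $f_k$ must equal $\lambda$ times a single monomial for some $\lambda \in K^\times$. However,
\[ f_k \;=\; \prod_{b_{ik}>0} x_i^{b_{ik}} \;+\; \prod_{b_{ik}<0} x_i^{-b_{ik}} \]
is a sum of two monomials whose supports are disjoint subsets of $\x\cup\y\setminus\{x_k\}$, so it fails to be a monomial whenever at least one of the two products is nontrivial. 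The only way $f_k$ can collapse to a scalar is when both products are empty, i.e., when $k$ is an isolated exchangeable vertex; this is excluded by the running hypothesis when $K$ is a field (Remark~\ref{rmk:isolated}), and in the case $K = \ZZ$ it yields $f_k = 2$, still a non-unit of $\mathcal{L}_{\x_k,\ZZ}$. Either way $f_k$ is a non-unit, contradicting what we derived above.

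The main (quite mild) obstacle is the final step of showing that the exchange polynomial cannot be a unit of $\mathcal{L}_{\x_k}$: this relies on the explicit description of units in a Laurent polynomial ring over a UFD together with the standing assumption that excludes isolated exchangeable vertices; with those two inputs in hand the argument reduces to an inspection of the binomial shape of $f_k$.
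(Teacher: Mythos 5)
Your argument is correct. The paper does not actually prove this lemma --- it is quoted from \cite[Lemma 5.2]{CKQ22} without proof --- so there is no in-paper argument to compare against; your route (pass to the adjacent seed $\x_k$, clear the power of $f_k$, and use that a unit of the Laurent polynomial ring $\mathcal{L}_{\x_k}$ is a scalar times a single monomial whereas $f_k$ is a sum of two distinct monomials) is the standard and complete one. The only point worth making explicit is that the standing hypothesis excluding isolated exchangeable indices is imposed on the \emph{initial} seed, while you need $k$ to be non-isolated in the (possibly mutated) seed $(\x,\y,B)$ of the statement; this is harmless because isolatedness of an index is preserved under mutation, but it deserves a sentence.
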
	 
	
	For every subset $S$ of a domain $A$ we denote by $\llbracket S \rrbracket_A$ the \defit{divisor-closed submonoid generated by $S$}, that is the set of all elements that divide an element of the submonoid generated by $S$. In particular if $u\in A$ then 
 \[ \llbracket u \rrbracket_A =\{\,a\in A \, \colon a| u^n, \, \text{for some}\, n\in\NN_0\,\}. \]
    \begin{proposition}\label{prop:divclosclusters}
		Let $\A$ be a cluster algebra and $\UU$ be the corresponding upper cluster algebra. Let $(\x,\y,B)$ be an arbitrary seed. Then the divisor-closed submonoid generated by a cluster monomial in $\x$ is $$\llbracket x_1^{e_1}\cdots x_n^{e_n} \rrbracket_{\UU} =\llbracket x_1^{e_1}\cdots x_n^{e_n} \rrbracket_{\A}= \{\,\epsilon x_1^{a_1}\cdots x_n^{a_n}\mid \epsilon\in \UU^\times,\, a_i\in \NN_0, \, \text{with} \, \phantom{,} a_i=0 \, \phantom{,}  \text{if}\, \phantom{,}  e_i=0\,\},$$ for all $e_1,\dots,e_n\in \NN_0$. 
		In particular, the monoid $\llbracket x_1^{e_1}\cdots x_n^{e_n} \rrbracket_{\UU} =\llbracket x_1^{e_1}\cdots x_n^{e_n} \rrbracket_{\A}$ is free abelian.
	\end{proposition}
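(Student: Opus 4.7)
The plan is to write $u := x_1^{e_1}\cdots x_n^{e_n}$, denote the set on the right-hand side of the claimed equality by $M$, and prove the chain of inclusions $M \subseteq \llbracket u\rrbracket_{\A} \subseteq \llbracket u\rrbracket_{\UU} \subseteq M$. The middle inclusion is immediate from $\A \subseteq \UU$, so the real work is the first and the last.

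For $M \subseteq \llbracket u\rrbracket_{\A}$, given $\epsilon x_1^{a_1}\cdots x_n^{a_n} \in M$ I would pick $N \in \NN_0$ large enough that $N e_i \ge a_i$ for every $i \in [1,n]$. The hypothesis $a_i = 0$ whenever $e_i = 0$ guarantees $a_i \le N e_i$ is automatic at the indices where $e_i = 0$. Then
\[
u^N \;=\; \bigl(\epsilon\, x_1^{a_1}\cdots x_n^{a_n}\bigr)\cdot \bigl(\epsilon^{-1}\, x_1^{N e_1 - a_1}\cdots x_n^{N e_n - a_n}\bigr),
\]
and Theorem~\ref{thm:unit} gives $\epsilon^{-1}\in \A^\times$, so the second factor lies in $\A$ and the desired divisibility is witnessed inside $\A$.

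The heart of the argument is $\llbracket u\rrbracket_{\UU} \subseteq M$. Suppose $a \in \UU$ and $u^N = ab$ with $b \in \UU$. The key move is to view this identity inside the Laurent polynomial ring $\mathcal{L}_{\x}$, where $u^N$ is a unit, forcing $a,b \in \mathcal{L}_{\x}^\times$. Hence $a = \lambda\, x_1^{a_1}\cdots x_{n+m}^{a_{n+m}}$ with $\lambda \in K^\times$ and $a_i \in \ZZ$. Since $x_{n+1},\ldots,x_{n+m} \in \UU^\times$ by Theorem~\ref{thm:unit}, the element $\lambda\, x_1^{a_1}\cdots x_n^{a_n}$ also lies in $\UU$, and Lemma~\ref{lemma:monomials} forces $a_1,\ldots,a_n \in \NN_0$. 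Applying the same reasoning to $b$ yields $N e_i - a_i \in \NN_0$, so $a_i \le N e_i$ for every $i$, and when $e_i = 0$ this forces $a_i = 0$. Setting $\epsilon := \lambda\, x_{n+1}^{a_{n+1}}\cdots x_{n+m}^{a_{n+m}} \in \UU^\times$ rewrites $a$ in the required form.

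For the free-abelian assertion, let $I := \{i \in [1,n] : e_i > 0\}$ and consider the map $M \to \NN_0^{I}$ sending $\epsilon\, x_1^{a_1}\cdots x_n^{a_n} \mapsto (a_i)_{i \in I}$. Well-definedness follows from the algebraic independence of $x_1,\ldots,x_{n+m}$ together with the fact that $\UU^\times$ consists of monomials in the frozen variables only; the map is then a surjective monoid homomorphism with kernel exactly $\UU^\times$, exhibiting $M/\UU^\times \cong \NN_0^{I}$ as a free abelian monoid. The step that I expect to require the most care is passing from "$a$ divides $u^N$ in $\UU$" to the explicit monomial form of $a$: one has to combine the unit-hood in $\mathcal{L}_\x$ with Lemma~\ref{lemma:monomials} applied symmetrically to both factors in order to control each exponent from both sides.
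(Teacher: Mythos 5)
Your proof is correct and takes essentially the same route as the paper: the same chain of inclusions $M \subseteq \llbracket u\rrbracket_{\A} \subseteq \llbracket u\rrbracket_{\UU} \subseteq M$, the same explicit factorization of $u^N$ for the first inclusion, and the same reduction to Laurent monomials in $\mathcal{L}_{\x}$ followed by Lemma~\ref{lemma:monomials} for the last. The only cosmetic differences are that you deduce monomiality from $a,b$ being units of $\mathcal{L}_{\x}$ rather than by comparing numerators of Laurent expansions, and that you spell out the free-abelian assertion, which the paper leaves implicit.
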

	\begin{proof}
		Denote by $M$ the set $M=\{\,\epsilon x_1^{a_1}\cdots x_n^{a_n}\mid \epsilon\in \UU^\times,\, a_i\in \NN_0, \, \text{with} \, \phantom{,} a_i=0 \, \phantom{,}  \text{if}\, \phantom{,}  e_i=0\,\}.$ 
		
		First, we prove that $M \subseteq \llbracket x_1^{e_1}\cdots x_n^{e_n} \rrbracket_{\A}$. Let $a\in M$, say $a=\epsilon x_1^{a_1}\cdots x_n^{a_n}$ for some $a_i\in \NN_0$,~$\epsilon \in \UU^\times$. Then $a$ divides the $N^{\text{th}}$-power of $x_1^{e_1}\cdots x_n^{e_n}$ with $N=\max\{\lceil \frac{a_i}{e_i} \rceil,\, i\in [1,n],\, e_i\ne 0 \}$, since trivially  $$(x_1^{e_1}\cdots 	x_n^{e_n})^N=(\epsilon x_1^{a_1}\cdots x_n^{a_n})(\epsilon^{-1}x_1^{e_1N-a_1}\cdots x_n^{e_nN-a_n})$$ holds. Hence $a\in \llbracket x_1^{e_1}\cdots x_n^{e_n} \rrbracket_{\A}$.
		
		Now, we show that $\llbracket x_1^{e_1}\cdots x_n^{e_n} \rrbracket_{\UU}\subseteq M$. Let $a\in \llbracket x_1^{e_1}\cdots x_n^{e_n} \rrbracket_{\UU}$, so there exist a non-negative integer $N\in \NN_0$ and an element $b\in \UU$ such that \begin{equation}\label{eq:3}
	 			x_1^{e_1N}\cdots x_n^{e_nN}=ab.
		\end{equation}
		Consider the Laurent expansion of $a$ and $b$ with respect to $\x$, say \[a=\frac{P(x_1,\dots,x_{n+m})}{x_1^{\alpha_1}\cdots x_{n+m}^{\alpha_{n+m}}},\qquad b=\frac{Q(x_1,\dots,x_{n+m})}{x_1^{\beta_1}\cdots x_{n+m}^{\beta_{n+m}}},\] where $P$,~$Q\in K[\x,\y]$ and $\alpha_i$,~$\beta_i\in \NN_0$ for every $i\in [1,n+m]$.
		
		So Equation \eqref{eq:3} can be rewritten as  $$x_1^{\alpha_1+\beta_1+e_1N}\cdots x_n^{\alpha_n+\beta_n+e_nN}x_{n+1}^{\alpha_{n+1}+\beta_{n+1}}\cdots x_{n+m}^{\alpha_{n+m}+\beta_{n+m}}=PQ,$$ hence $P$ and $Q$ must be monomials in $x_1,\dots,x_{n+m}$, and a fortiori $a$,~$b$ are associated to Laurent monomials in $x_1,\dots,x_{n+m}.$ By Lemma \ref{lemma:monomials}, we get $a=\epsilon x_1^{a_1}\cdots x_{n}^{a_{n}}$ with $a_1,\dots,a_n\in \NN_0$ and $\epsilon\in\UU^\times.$ Observe that, if there is $i\in [1,n]$ such that $e_i=0,$ then $x_i \nmid a,$ whence $a_i=0.$
		
		Therefore, we have proved the following inclusions 
		$$M \subseteq \llbracket x_1^{e_1}\cdots x_n^{e_n} \rrbracket_{\A}\subseteq \llbracket x_1^{e_1}\cdots x_n^{e_n} \rrbracket_{\UU} \subseteq M,$$ and this concludes the proof. 
	\end{proof}
	
	\begin{corollary}
		Let $\Sigma=(\x,\y,B) $ be a seed. Let $\A=\A(\Sigma)$and $\UU=\UU(\Sigma)$ be the cluster algebra and upper cluster algebra associated to $\Sigma$, respectively. Then every cluster variable is a strong atom of $\A$ and $\UU$.
	\end{corollary}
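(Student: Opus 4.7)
Let $u$ be a cluster variable. By definition there is a seed $\Sigma' = (\x,\y,B)$, mutation-equivalent to the initial one, in which $u$ appears, say $u = x_j$ for some $j \in [1,n]$. Since mutation-equivalent seeds yield the same cluster algebra and the same upper cluster algebra, it suffices to argue with respect to $\Sigma'$. The plan is then to apply Proposition \ref{prop:divclosclusters} to the exponent vector $e_j = 1$, $e_k = 0$ for $k\ne j$, which collapses the divisor-closed submonoid $\llbracket u \rrbracket$ in both $\A$ and $\UU$ to the very small set
\[ M := \{\,\epsilon u^a \mid \epsilon \in \UU^\times,\ a\in \NN_0\,\}, \]
so $\llbracket u \rrbracket_\A = \llbracket u \rrbracket_\UU = M$.

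The second step is to exploit that $M$ is essentially free of rank one. Fix $n > 1$ and consider any factorization $u^n = v_1 \cdots v_m$ into atoms of $\A$ (respectively of $\UU$). Each $v_i$ divides $u^n$, hence $v_i \in M$, so we may write $v_i = \epsilon_i u^{a_i}$ with $\epsilon_i \in \UU^\times$ and $a_i \in \NN_0$. Since by Theorem \ref{thm:unit} the unit group consists of Laurent monomials in the frozen variables only, the exchangeable variable $u$ is a non-unit, and so is $\epsilon u^a$ whenever $a \ge 1$. As each atom $v_i$ is a non-unit, this forces $a_i \ge 1$; and if some $a_i \ge 2$, then the splitting $v_i = (\epsilon_i u^{a_i - 1}) \cdot u$ expresses $v_i$ as a product of two non-units, contradicting irreducibility. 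Thus every $a_i = 1$, every $v_i$ is associated to $u$, and comparing the $u$-exponents on both sides gives $m = n$. This is exactly the statement that $u$ is a strong atom.

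The main step is really the invocation of Proposition \ref{prop:divclosclusters}: once one knows that divisors of powers of $u$ are, up to units, just non-negative powers of $u$, there is no obstacle left — everything reduces to the trivial factorization theory of the monoid $\UU^\times \times \NN_0$. No separate argument is needed for $\A$ versus $\UU$ since the proposition delivers both equalities simultaneously.
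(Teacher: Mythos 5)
Your proposal is correct and follows the same route as the paper, which simply observes that the corollary is a direct consequence of Proposition \ref{prop:divclosclusters} applied to the divisor-closed submonoid generated by a single cluster variable. You merely make explicit the details the paper leaves implicit (that every divisor of $u^n$ is a unit times a power of $u$, that atoms force exponent $1$, and that exponent comparison yields $m=n$), and these details are all sound.
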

	\begin{proof}
		The statement is a direct consequence of Proposition \ref{prop:divclosclusters} applied to the divisor-closed submonoid generated by one cluster variable.
	\end{proof}

	The Laurent phenomenon implies that $ A\subseteq \UU\subseteq \bigcap_{k=0}^n\mathcal{L}_{\x_k}$, for every seed $(\x,\y,B)$. The ring $\bigcap_{k=0}^n\mathcal{L}_{\x_k}$ is a finite intersection of Laurent polynomial rings, which, being factorial, are in turn intersections of DVRs. So we can write $\bigcap_{k=0}^n\mathcal{L}_{\x_k}=\bigcap_{i\in I}D_i$, with $D_i$ a DVR for every $i\in I$. 	
		Hence we have a monoid homomorphism
		$$\mathbf{v}\colon \UU\to \NN_0^{(I)},  \qquad u\mapsto (v_i(u))_{i\in I}\,, $$ where $v_i\colon \mathbf{q}(D_i)\to\ZZ$ is the discrete valuation of $D_i$ and $\NN_0^{(I)}=\{\,(a_i)\in \NN_0^I\mid a_i=0 \,\, \text{for all but}$ $\text{fi\-ni\-te\-ly many}\, i\in I\,\}$. Observe that, by Theorem \ref{thm:unit}, \begin{equation}\label{eq:1}
			\A^\times=\UU^\times=\Bigl(\bigcap_{k=0}^n\mathcal{L}_{\x_k}\Bigr)^\times=\Bigl(\bigcap_{i\in I}D_i\Bigr)^\times=\bigcap_{i\in I}D_i^\times.
		\end{equation}

	\begin{lemma}\label{lemma:divhom}
		Let $\Sigma=(\x,\y,B)$ be a seed and $\A=\A(\Sigma)$ and $\UU=\UU(\Sigma)$ be the cluster algebra and the upper cluster algebra associated to $\Sigma$, respectively. Let $\mathbf{v}\colon \UU\to \NN_0^{(I)}$ be the monoid homomorphism defined above. Then  $$\mathbf{v}(a)=\mathbf{v}(b) \iff a\simeq_{\,\UU} b\iff a\simeq_\A b.$$
	\end{lemma}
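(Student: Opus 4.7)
My plan is to reduce the claim to equation~\eqref{eq:1}, which identifies $\A^\times=\UU^\times=\bigcap_{i\in I}D_i^\times$. Because the two unit groups agree, the two associate relations $\simeq_{\UU}$ and $\simeq_\A$ are literally the same relation, so the chain of equivalences collapses to a single statement: $\mathbf{v}(a)=\mathbf{v}(b)\iff a=\epsilon b$ for some $\epsilon\in\UU^\times$. I would prove the two implications by direct valuation computations.

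For the easy direction, assume $a=\epsilon b$ with $\epsilon\in\UU^\times$. By equation~\eqref{eq:1}, $\epsilon$ lies in $D_i^\times$ for every $i\in I$, hence $v_i(\epsilon)=0$, and additivity of $v_i$ gives $v_i(a)=v_i(b)$ for all $i$, i.e.\ $\mathbf{v}(a)=\mathbf{v}(b)$.

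For the converse, take nonzero $a,b\in\UU$ with $\mathbf{v}(a)=\mathbf{v}(b)$ and form $\epsilon:=a/b\in\mathbf{q}(\UU)$. Since $\UU\subseteq\mathbf{q}(D_i)$, the element $\epsilon$ makes sense in each $\mathbf{q}(D_i)$ and has $v_i(\epsilon)=v_i(a)-v_i(b)=0$. Hence $\epsilon\in\bigcap_{i\in I}D_i^\times$, which by equation~\eqref{eq:1} coincides with $\UU^\times=\A^\times$. Writing $a=\epsilon b$ therefore exhibits $a$ and $b$ as associates in both $\UU$ and $\A$.

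The only subtle point worth flagging is the following: in general $\UU$ is strictly contained in $\bigcap_{k=0}^{n}\mathcal{L}_{\x_k}=\bigcap_{i\in I}D_i$ (unless the starfish condition at the chosen seed holds), so one cannot simply declare $\epsilon\in\UU$ from the inequalities $v_i(\epsilon)\ge 0$. What saves the argument is that $\epsilon$ is a \emph{unit} in every $D_i$, combined with Theorem~\ref{thm:unit}, which forces the units of the intersection to already lie in $\UU^\times$. This is the only nontrivial input; the rest is bookkeeping with valuations.
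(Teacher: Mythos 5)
Your proof is correct and follows essentially the same route as the paper's: both reduce the statement to Equation~\eqref{eq:1} and show that $\mathbf{v}(a)=\mathbf{v}(b)$ forces $a/b\in\bigcap_{i\in I}D_i^\times=\UU^\times=\A^\times$ (the paper phrases this by writing $a=u_ip_i^{n_i}$ and $b=u_i'p_i^{n_i}$ in each DVR and noting the resulting units $w_i=a/b$ all coincide, which is the same computation). The subtlety you flag --- that one needs the units of the intersection, not mere membership in $\bigcap_i D_i$ --- is precisely the content of Equation~\eqref{eq:1}, so nothing is missing.
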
 
	\begin{proof}
		First assume that $a$,~$b$ are two elements of $\UU$ such that $\mathbf{v}(a)=\mathbf{v}(b)$, say $v_i(a)=v_i(b)=n_i$ for every $i\in I$.  Let $p_{i}$ denote the unique (up to associates) prime element of $D_{i}$ with $i\in I$. By definition of a DVR (see Section \ref{subsec:krull}), for every $i\in I$, we have that $a=u_ip_i^{n_i}$ and $b=u_i'p_i^{n_i}$ with $u_i$,~$u'_i\in D_i^\times$, that is $a=w_ib$ for some $w_i\in D_i^\times$. Hence $w_ib=w_jb$ for all $ i$,~$j\in I$ implies $w_i=w_j$ and so $a=wb$ for some $w\in \bigcap_{i\in I} D_i^\times$. Thus Equation \eqref{eq:1} implies that $a\simeq_{\,\UU} b$, and this is equivalent to $a\simeq_{\A} b$.
		The other direction is straightforward.
	\end{proof}

	\begin{theorem}\label{prop:FF}
		Let $\Sigma=(\x,\y,B)$ be a seed and $\A=\A(\Sigma)$ and $\,\UU=\UU(\Sigma)$ be the cluster algebra and the upper cluster algebra associated to $\Sigma$, respectively. Then $\A$ and $\UU$ are {\rm FF}-domains. In particular, they are {\rm BF}-domains and atomic.
	\end{theorem}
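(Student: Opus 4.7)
The plan is to apply the characterization recalled in Section \ref{sec1} (from \cite[Proposition 1.5.5]{GH06}) that a domain is an FF-domain if and only if every element has only finitely many non-associated divisors. The key tool will be the monoid homomorphism $\mathbf{v}\colon \UU\to \NN_0^{(I)}$, $u\mapsto (v_i(u))_{i\in I}$, constructed just before Lemma \ref{lemma:divhom}, together with Lemma \ref{lemma:divhom} itself, which identifies association in $\UU$ (equivalently in $\A$) with equality of $\mathbf{v}$-images.

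First, I would observe that $\mathbf{v}$ is compatible with divisibility in $\UU$: if $b$ divides $a$ in $\UU$, then the quotient $a/b$ lies in $\UU\subseteq\bigcap_{i\in I}D_i$, so $v_i(a/b)\ge 0$ and hence $v_i(b)\le v_i(a)$ for every $i\in I$. Thus $\mathbf{v}(b)\le \mathbf{v}(a)$ componentwise in $\NN_0^{(I)}$. Since $a$ is a non-zero element of the finite intersection $\bigcap_{k=0}^n \mathcal{L}_{\x_k}$, and each Laurent polynomial ring is factorial (so only finitely many of its height-$1$ primes contain $a$), the tuple $\mathbf{v}(a)$ has finite support. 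Consequently the set $\{\,\mathbf{c}\in\NN_0^{(I)}:\mathbf{c}\le\mathbf{v}(a)\,\}$ is finite.

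Combining these two observations, the image under $\mathbf{v}$ of the set of divisors of $a$ in $\UU$ lies in a fixed finite subset of $\NN_0^{(I)}$. By Lemma \ref{lemma:divhom}, divisors with the same $\mathbf{v}$-value are associated in both $\UU$ and $\A$. Hence $a$ admits only finitely many non-associated divisors in $\UU$, and the same argument (using $\A\subseteq\UU$ and $\A^\times=\UU^\times$ from Theorem \ref{thm:unit}) gives the statement for $\A$. Invoking the cited characterization of FF-monoids yields the theorem; the remaining implications (FF $\Rightarrow$ BF $\Rightarrow$ atomic) are recalled in Section \ref{sec1}.

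The only subtle point is that $\mathbf{v}$ is not a priori a divisor homomorphism into $\NN_0^{(I)}$ in the strict sense — an inequality $\mathbf{v}(b)\le\mathbf{v}(a)$ need not imply $b\mid a$ in $\UU$, since $\UU$ may be strictly smaller than $\bigcap_{k=0}^n\mathcal{L}_{\x_k}$ (cf.\ Examples \ref{ex:1}). This is why we only bound the $\mathbf{v}$-images of divisors rather than count divisors via preimages; crucially, Lemma \ref{lemma:divhom} already provides the bridge from equality of $\mathbf{v}$-images to association inside $\UU$, which is all that the FF characterization requires.
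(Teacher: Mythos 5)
Your proposal is correct and follows essentially the same route as the paper: both arguments push divisors of $a$ through the homomorphism $\mathbf{v}$, note that the possible values $\mathbf{v}(v)$ for $v\mid a$ form a finite set because $\mathbf{v}(a)$ has finite support, and then invoke Lemma \ref{lemma:divhom} to conclude that divisors with equal $\mathbf{v}$-image are associated, which gives the FF-property via the finitely-many-divisors characterization. Your explicit componentwise bound $\mathbf{v}(b)\le\mathbf{v}(a)$ is just a cleaner phrasing of the paper's count via sums of atoms of $\NN_0^{(I)}$ dividing $\mathbf{v}(a)$, and your closing caveat about $\mathbf{v}$ not being a divisor homomorphism is a sound (if unneeded) clarification.
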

	\begin{proof}
		Let $H$ denote either $\A^\bullet$ or $\UU^\bullet$, and let $a\in H$. We want to show that $a$ has only finitely many non-associated divisors. 
		Consider the set $\Omega=\{\,e_i\mid v_i(a)>0\,\}$ of all the atoms of $\NN_0^{(I)}$ that divide $\mathbf{v}(a)$. Here $e_i$ denotes the tuple with all components $0$ except the $i$-th entry that is $1$. Notice that $\Omega$ is finite.
		
		Let $v\in H$ be such that $v| a$. Then, since $\mathbf{v}(v)|\mathbf{v}(a)$, there exist $\omega_1,\dots,\omega_k\in\Omega$ such that $\mathbf{v}(v)=\omega_1+\cdots+\omega_k,$ hence, since $\Omega$ is finite, there are only finitely many possibilities for $\mathbf{v}(v)$ for each divisor $v$ of $a$. So let $u$ be another divisor of $a$ such that $\mathbf{v}(u)=\omega_1+\cdots+\omega_k=\mathbf{v}(v).$ Then Lemma \ref{lemma:divhom} implies that $u\simeq_H v$, thus $a$ has only finitely many non-associated divisors.
	\end{proof}
	
	\begin{remark} \label{rem:atomic}
        In \cite{GELS19} the cluster algebras under investigation are usually Krull domains, and hence FF-domains (in particular, atomic).
        However, \cite[Corollary 1.23]{GELS19} (which is subsumed in our Proposition~\ref{prop:nagata}), concerns arbitrary cluster algebras. 
        To deduce that the cluster variables being prime is sufficient for an (upper) cluster algebra to be factorial, one needs to know a priori that the cluster algebra is atomic.
        This was taken for granted in \cite[Corollary 1.23]{GELS19}.
        However, it is somewhat non-trivial and our Theorem~\ref{prop:FF} provides a proof of this fact.
		More generally, in the statement \cite[Corollary 1.20]{GELS19} of a corollary of Nagata's Theorem, an assumption that $A$ be atomic is missing.
        See \cite[Section 1]{AAZ92} for details.
        A proof that (upper) cluster algebras are atomic is published in \cite[Appendix A]{CKQ22}. In fact, this proof shows that they are BF-domains (but not the stronger claim that they are FF-domains).
	\end{remark}
	
	Recall that a domain is a Krull domain if and only if it is completely integrally closed and $v$-noetherian. Moreover, a Krull domain is always an FF-domain. 
	\begin{theorem}\label{prop:cickrull}
		Let $\Sigma=(\x,\y,B)$ be a seed and $\,\UU=\UU(\Sigma)$ be the upper cluster algebra associated to $\Sigma$. Then $\UU$ is completely integrally closed. 
		Moreover, if there exists a seed such that $\,\UU$ satisfies the starfish condition at that seed, then $\UU$ is a Krull domain.
	\end{theorem}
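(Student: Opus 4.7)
The plan is to handle the two assertions separately, since they require rather different amounts of information about the intersection defining $\UU$.

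For the first claim, I would start from the observation that for every seed $\mathbf{z} \sim \x$, the Laurent polynomial ring $\mathcal{L}_{\mathbf{z}}$ is a factorial domain---being a localization of a polynomial ring over the factorial base ring $K$ (either a field of characteristic zero or $\ZZ$)---and in particular is completely integrally closed in its quotient field $\mathcal{F}$. I would then invoke the standard fact that an arbitrary intersection of completely integrally closed subrings of a common field is again completely integrally closed. Concretely, if $x \in \mathbf{q}(\UU)$ is almost integral over $\UU$ with witness $c \in \UU^\bullet$ satisfying $cx^n \in \UU \subseteq \mathcal{L}_{\mathbf{z}}$ for all $n \ge 0$, then the same $c$ witnesses almost integrality of $x$ over each $\mathcal{L}_{\mathbf{z}}$; complete integral closure of $\mathcal{L}_{\mathbf{z}}$ places $x$ in $\mathcal{L}_{\mathbf{z}}$, and intersecting over $\mathbf{z} \sim \x$ in the defining presentation $\UU = \bigcap_{\mathbf{z} \sim \x} \mathcal{L}_{\mathbf{z}}$ yields $x \in \UU$.

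For the second claim, I would use the valuation-theoretic characterization of Krull domains recalled in Section~\ref{subsec:krull}. Under the starfish condition at $(\x, \y, B)$ we have the \emph{finite} intersection $\UU = \bigcap_{i=0}^n \mathcal{L}_{\x_i}$. Each $\mathcal{L}_{\x_i}$ is a Krull domain (being a UFD), and so admits a presentation $\mathcal{L}_{\x_i} = \bigcap_{v \in V_i} \mathcal{O}_v$ as an intersection of DVRs of $\mathcal{F}$, where every nonzero $f \in \mathcal{F}$ satisfies $v(f)=0$ for all but finitely many $v \in V_i$. Combining these gives
\[ \UU = \bigcap_{v \in V_0 \cup \cdots \cup V_n} \mathcal{O}_v, \]
and since a finite union of finite sets is finite, this combined family still satisfies the finite-character condition. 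This is precisely the valuation definition of a Krull domain, so $\UU$ is Krull.

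The main subtlety, and what explains the asymmetry between the two assertions, is exactly the role of finiteness. Complete integral closure passes through arbitrary intersections, so it is automatic from the defining description of $\UU$; however, an arbitrary (possibly infinite) intersection of DVRs can easily fail to be $v$-noetherian, because the finite-character condition on valuations need not survive an infinite union. The starfish hypothesis is what collapses the defining intersection to a finite one, after which the finite-character condition---and hence the full Krull property via Theorem~\ref{thm:krull}---is preserved. I expect the only place where one has to be mildly careful is verifying that the ambient fields of the $\mathcal{L}_{\x_i}$ all coincide with $\mathcal{F}$, which is immediate from the definition but essential for viewing the families $V_i$ as acting on the same field.
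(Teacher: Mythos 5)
Your proposal is correct and follows essentially the same route as the paper: the paper likewise disposes of complete integral closure by noting that $\UU$ is an (arbitrary) intersection of completely integrally closed domains, and obtains the Krull property from the starfish condition by observing that $\UU$ then becomes a \emph{finite} intersection of Krull domains. You merely unpack the two standard lemmas the paper invokes as black boxes (via the almost-integrality witness and the finite-character valuation criterion, respectively), and your diagnosis of where finiteness is genuinely needed matches the paper's reasoning.
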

	\begin{proof}
		The upper cluster algebra $\UU$, being an intersection of completely integrally closed domains, is completely integrally closed. Moreover, if we assume $\UU=\bigcap_{i=0}^n\mathcal{L}_{\x_i}$ for some seed $(\x,\y,B)$, then $\UU$, being a finite intersection of Krull domains, is a Krull domain.
	\end{proof}
	
	\section{Class Groups of Upper Cluster Algebras}\label{sec3}
    In this section we determine the class groups of full rank upper cluster algebras (more generally, upper cluster algebras, satisfying the starfish condition).
	To do so, we need some preliminary results.
    We prove them in the more general setting of upper cluster algebras that are Krull domains.
    \begin{proposition}\label{prop:valuationlaurentpolyn}
		Let $A$ be a domain and let  $x_1,\dots,x_n\in A$ be such that $A_{\x}:=A[x_1^{-1},\dots,x_n^{-1}]=D[x_1^{\pm 1},\dots,x_n^{\pm 1}]$ is a factorial Laurent polynomial ring for some subring $D$ of $A$. If $f\in A$ is such that $f\notin A_{\x}^\times$ and $f$ has no repeated factors in $A_{\x}$, then there exist a height-$1$ prime ideal $\mathfrak{p}$ of $A$  and a discrete valuation $v_{\mathfrak{p}}\colon \mathbf{q}(A)\to \ZZ\cup\{\infty\}$ such that $v_\mathfrak{p}(f)=1$ and $v_\mathfrak{p}(x_i)=0$ for $i\in [1,n].$
	\end{proposition}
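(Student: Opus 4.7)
The plan is to exploit the factoriality of $A_{\x}$ together with the correspondence between primes of $A$ and $A_{\x}$ given by Proposition~\ref{prop:loc.ideals}. Since $A_{\x} = D[x_1^{\pm 1}, \ldots, x_n^{\pm 1}]$ is factorial and $f \in A$ is not a unit in $A_{\x}$ and has no repeated factors, I can write
\[
f = u\, p_1 \cdots p_k,
\]
with $k \ge 1$, $u \in A_{\x}^\times$, and $p_1, \ldots, p_k$ pairwise non-associated prime elements of $A_{\x}$. The key observation that makes the valuation condition on the $x_i$ automatic is that each $x_i$ is a unit of $A_{\x}$, so no $x_i$ is associated to any $p_j$.

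Next, I would pick one prime factor, say $p := p_1$, and consider the height-$1$ prime ideal $\mathfrak{q} := p A_{\x}$ of $A_{\x}$. Because $A_{\x} = S^{-1}A$ where $S = \{x_1^{a_1}\cdots x_n^{a_n} : a_i \in \NN_0\}$, Proposition~\ref{prop:loc.ideals} tells me that $\mathfrak{p} := \mathfrak{q}\cap A$ is a prime ideal of $A$ of height $1$, disjoint from $S$. Moreover, localization transitivity gives $A_{\mathfrak{p}} = (A_{\x})_{\mathfrak{q}}$, and since $A_{\x}$ is factorial the latter is a DVR; its valuation, which I call $v_{\mathfrak{p}}$, is the standard $p$-adic valuation on $\mathbf{q}(A_{\x}) = \mathbf{q}(A)$.

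Finally, I would verify the two numerical claims. For the variables, $x_i \in A_{\x}^\times$ implies $v_{\mathfrak{p}}(x_i) = 0$. For $f$, using the factorization above and the fact that the $p_j$ are pairwise non-associated primes of the factorial domain $A_{\x}$,
\[
v_{\mathfrak{p}}(f) = v_{\mathfrak{p}}(u) + \sum_{j=1}^{k} v_{\mathfrak{p}}(p_j) = 0 + 1 + 0 + \cdots + 0 = 1,
\]
as required.

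I do not expect a serious obstacle: the statement is essentially a direct packaging of the fact that, in a factorial localization, distinct prime factors contract to distinct height-$1$ primes of the base, and that elements inverted in the localization are invisible to the associated valuations. The only point requiring a small amount of care is identifying $A_{\mathfrak{p}}$ with $(A_{\x})_{\mathfrak{q}}$ so that $v_{\mathfrak{p}}$ is literally the $p$-adic valuation of the factorial ring $A_{\x}$, which is immediate from transitivity of localization once one checks $\mathfrak{p}\cap S = \emptyset$.
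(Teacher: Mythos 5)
Your proof is correct and follows essentially the same route as the paper: pick a prime factor $p$ of $f$ in the factorial localization $A_{\x}$, contract $pA_{\x}$ to a height-$1$ prime $\mathfrak{p}$ of $A$ via Proposition~\ref{prop:loc.ideals}, and read off $v_{\mathfrak{p}}(f)=1$ from the absence of repeated factors and $v_{\mathfrak{p}}(x_i)=0$ from $x_i\in A_{\x}^\times$. The only cosmetic difference is that the paper cites Krull's Principal Ideal Theorem for the height of $pA_{\x}$ where you use that $(A_{\x})_{pA_{\x}}$ is a DVR; these are interchangeable here.
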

	\begin{proof}
		Let $p\in A_\x$ be a prime factor of $f$. Set $\mathfrak{p}=pA_\x\cap A.$ The prime ideal $pA_\x$ has height $1$ by Krull's Principal Ideal Theorem and hence so does $\mathfrak{p}$ (Proposition \ref{prop:loc.ideals}). Since $f$ has no repeated factors, $v_{\mathfrak{p}}(f)=1$. For $i\in [1,n]$,  $x_i$ is a unit of $A_\x$, therefore $x_i\notin \mathfrak{p}$. Hence $v_\mathfrak{p}(x_i)=0.$
	\end{proof}
	Notice that for the proof a weaker assumption is sufficient. It is enough to assume that there exists $f\notin A_\x^\times$ that has a prime factor of multiplicity $1$. 
	\begin{corollary}\label{cor:valuationsuppclusalg}
		Let $\Sigma=(\x,\y, B)$ be a seed. Let $\UU=\UU(\Sigma)$ be the upper cluster algebra associated to $\Sigma$. Then, for every $i \in I$ there exists a height-$1$ prime ideal $\mathfrak{p}$ of $\UU$ such that $v_\mathfrak{p}(x_i)=1$ and $v_\mathfrak{p}(x_j)=0$ for every $j\in [1,n]\setminus \{i\}$.
	\end{corollary}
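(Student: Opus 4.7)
The plan is to apply Proposition~\ref{prop:valuationlaurentpolyn} with $A = \UU$, $f = x_i$, and inverted variables coming from the mutated cluster $\x_i$. Fix $i \in [1, n]$ and consider the mutation $\mu_i(\Sigma) = (\x_i, \y, B_i)$ with $\x_i = (x_1, \ldots, x_i', \ldots, x_n)$. Since the upper cluster algebra is invariant under mutation of the initial seed, the Laurent phenomenon applies at $\x_i$ and yields $\UU \subseteq \mathcal{L}_{\x_i}$.

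First I would check that $\UU[x_1^{-1}, \ldots, {x_i'}^{-1}, \ldots, x_n^{-1}] = \mathcal{L}_{\x_i}$, so that Proposition~\ref{prop:valuationlaurentpolyn} applies with $D = K[y_1^{\pm 1}, \ldots, y_m^{\pm 1}]$ (the frozen variables are already units in $\UU$). The inclusion ``$\subseteq$'' is the Laurent phenomenon at $\x_i$, while ``$\supseteq$'' is immediate, as every generator of $\mathcal{L}_{\x_i}$ and its inverse lies in the localization. Next I would verify that $x_i \notin \mathcal{L}_{\x_i}^\times$: the exchange relation $x_i x_i' = f_i$ together with $x_i' \in \mathcal{L}_{\x_i}^\times$ reduces this to showing that $f_i = P + Q$ is not a monomial, which holds because the monomials $P = \prod_{b_{ki} > 0} x_k^{b_{ki}}$ and $Q = \prod_{b_{ki} < 0} x_k^{-b_{ki}}$ are both non-zero and have disjoint variable supports (invoking the no-isolated-vertex assumption in the field case).

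The main technical step is showing that $x_i$ has no repeated factors in $\mathcal{L}_{\x_i}$, i.e., that $f_i$ is squarefree as a Laurent polynomial (again using that $x_i'$ is a unit). Multiplying $f_i$ by the unit $Q^{-1}$ reduces this to the squarefreeness of $1 + m$, where $m = P/Q$ is a Laurent monomial. Writing $m = n^d$, with $d$ the gcd of the exponent vector of $m$ and $n$ a primitive Laurent monomial, the polynomial $1 + n^d$ splits over $\overline{K}$ into $d$ distinct linear factors in $n$ (the $d$-th roots of $-1$ being distinct, since $\operatorname{char} K = 0$); these factors are pairwise coprime in $\overline{K}[x_1^{\pm 1}, \ldots]$, so $1 + m$ is squarefree. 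The case $K = \ZZ$ follows by base change to $\mathbb{Q}$.

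With both hypotheses verified, Proposition~\ref{prop:valuationlaurentpolyn} produces a height-$1$ prime $\mathfrak{p}$ of $\UU$ and a valuation $v_{\mathfrak{p}}$ with $v_{\mathfrak{p}}(x_i) = 1$ and $v_{\mathfrak{p}}(x_j) = 0$ for $j \in [1, n] \setminus \{i\}$ (the latter because each such $x_j$ is among the inverted variables), which is exactly the conclusion of the corollary. I expect the squarefreeness of $f_i$ to be the only step requiring a genuine argument; the remaining verifications are bookkeeping.
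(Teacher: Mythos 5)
Your proof is correct and follows essentially the same route as the paper: mutate in direction $i$, identify $\mathcal{L}_{\x_i}$ as a localization of $\UU$, and apply Proposition~\ref{prop:valuationlaurentpolyn} (you apply it to $x_i$ rather than to $f_i=x_ix_i'$, which is equivalent since $x_i'$ is a unit of $\mathcal{L}_{\x_i}$). The only substantive difference is that you prove the squarefreeness of $f_i$ from scratch via the root-of-unity factorization of $1+n^d$, whereas the paper simply cites \cite[Proposition 2.3]{GELS19} for this fact.
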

	\begin{proof}
		We mutate $\x=(x_1,\dots,x_n)$ in direction $i$ obtaining a new seed $\x_i.$ Let $f_i=x_ix_i'\in K[\x_i,\y]$ be the exchange polynomial of $x_i$ associated to $\Sigma.$ We have that $$\UU[\x_i^{- 1},\y^{- 1}]=K[\x_i^{\pm1},\y^{\pm 1}],$$ hence $K[\x_i^{\pm1},\y^{\pm 1}]$ is a Laurent polynomial ring, and therefore factorial. By \cite[Proposition 2.3]{GELS19} we know that $f_i$ does not have repeated factors. Moreover, due to our assumption on isolated seeds (Remark \ref{rmk:isolated}), one has $f_i\notin K[\x_i^{\pm 1},\y^{\pm 1}]^\times$,  so $f_i$ satisfies the hypothesis of Proposition \ref{prop:valuationlaurentpolyn}. Hence we can conclude that there exists a height-$1$ prime ideal $\mathfrak{p}$ of $\UU$ such that $v_\mathfrak{p}(f_i)=1$, $v_\mathfrak{p}(x_j)=0$ for every $j\in[1,n]\setminus\{i\}$, and $v_\mathfrak{p}(x_i')=0$, hence $v_\mathfrak{p}(x_i)=1$. 
	\end{proof}

	\smallskip
	
	Let us recall this very general result on Krull domains.
	\begin{theorem}[{\cite[Theorems 3.1 and 3.2]{GELS19}}]\label{thm:classgroupgeneralcase}
		Let $A$ be a Krull domain, and let $x_1,\dots,x_n\in A$ be such that $A_{\x}:=A[x_1^{-1},\dots,x_n^{-1}]=D[x_1^{\pm 1},\dots,x_n^{\pm 1}]$ is a factorial Laurent polynomial ring for some subring $D$ of $A$. Let $\mathfrak{p}_1,\dots,\mathfrak{p}_t$ be the pairwise distinct height-1 prime ideals of $A$ containing one of the elements $x_1,\dots,x_n$. Suppose that $$x_iA={\vprod_{l=1}^t} \mathfrak{p}_j^{a_{ij}},$$ with $\mathbf{a}_i=(a_{ij})_{j=1}^t\in \NN_0^t.$ Then $\mathcal{C}(A)\cong \ZZ^t/\langle\mathbf{a}_i\mid i\in[1,n]\rangle$ and it is generated by $[\mathfrak{p}_1],\dots,[\mathfrak{p}_t]$.
		
		Suppose in addition that $D$ is infinite and either $n\ge 2$ or $n=1$ and $D$ has at least $|D|$ height-1 prime ideals. Then every class of $\mathcal{C}(A)$ contains precisely $|D|$ height-1 prime ideals.
	\end{theorem}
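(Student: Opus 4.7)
The approach is Nagata's theorem on class groups of localizations applied to $S=\{\,x_1^{m_1}\cdots x_n^{m_n}\mid m_i\in\NN_0\,\}$. Since $S^{-1}A=A_\x$ is factorial, $\mathcal{C}(A_\x)=0$, and by Proposition~\ref{prop:loc.ideals} the height-$1$ primes of $A$ meeting $S$ are exactly $\mathfrak{p}_1,\dots,\mathfrak{p}_t$. Nagata's theorem then shows that $\mathcal{C}(A)$ is generated by $[\mathfrak{p}_1],\dots,[\mathfrak{p}_t]$ and that the relations come from fractional principal ideals $xA$ with divisor supported on $\{\mathfrak{p}_1,\dots,\mathfrak{p}_t\}$. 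Such an $x$ is precisely an element of $A_\x^\times$; since $A_\x=D[x_1^{\pm 1},\dots,x_n^{\pm 1}]$ is a Laurent polynomial ring over the domain $D$, every such unit has the form $u\cdot x_1^{e_1}\cdots x_n^{e_n}$ with $u\in D^\times\subseteq A^\times$ and $e_i\in\ZZ$. The unit $u$ contributes zero valuation at every height-$1$ prime of $A$, so $v_{\mathfrak{p}_j}(x)=\sum_i e_ia_{ij}$. Together with the explicit relations $x_iA=\vprod_j\mathfrak{p}_j^{a_{ij}}$, this pins down the relation module as the $\ZZ$-span of $\mathbf{a}_1,\dots,\mathbf{a}_n$, yielding $\mathcal{C}(A)\cong\ZZ^t/\langle\mathbf{a}_i\mid i\in[1,n]\rangle$.

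For the cardinality statement, every height-$1$ prime $\mathfrak{q}\ne\mathfrak{p}_j$ of $A$ extends to a principal height-$1$ prime $\mathfrak{q}A_\x=PA_\x$ of the factorial ring $A_\x$, with $P\in D[x_1,\dots,x_n]$ irreducible and not divisible by any $x_i$. Decomposing the principal divisor $PA=\mathfrak{q}\cdot_v\vprod_j\mathfrak{p}_j^{v_{\mathfrak{p}_j}(P)}$ then yields $[\mathfrak{q}]=-\sum_j v_{\mathfrak{p}_j}(P)[\mathfrak{p}_j]$ in $\mathcal{C}(A)$. Since $|A_\x|=|D|$ for infinite $D$, each class contains at most $|D|$ such primes, and the task reduces to constructing $|D|$ non-associated irreducibles realizing any given class $c$.

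The main obstacle is this constructive lower bound. Because $\mathbf{a}_i\in\NN_0^t$ and each $\mathfrak{p}_j$ contains some $x_i$, every class admits a representative $c=-\sum_j n_j[\mathfrak{p}_j]$ with $n_j\in\NN_0$; Theorem~\ref{thm:approx} applied to the finite set $\{\mathfrak{p}_1,\dots,\mathfrak{p}_t\}$ together with unique factorization in $A_\x$ then supplies at least one irreducible $P_0\in D[x_1,\dots,x_n]$ whose class is $c$. To pass from a single $P_0$ to $|D|$ non-associated instances when $n\ge 2$, one varies $P_0$ in a one-parameter family such as $\{P_0+d\cdot Q\mid d\in D\}$ for a suitably chosen $Q\in D[x_1,\dots,x_n]$, invoking a Bertini-type irreducibility argument to discard only a proper subvariety of parameters for which either irreducibility or the valuation profile at $\mathfrak{p}_1,\dots,\mathfrak{p}_t$ is lost; for $n=1$, the extra hypothesis that $D$ has $|D|$ height-$1$ prime ideals directly supplies $|D|$ irreducibles of $D$, each contracting to a distinct height-$1$ prime of $A$ in the class of $P_0$.
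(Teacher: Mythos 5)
First, a point of comparison: the paper does not prove this statement at all — it is recalled verbatim from \cite{GELS19} (Theorems 3.1 and 3.2) and used as a black box, so there is no internal proof to measure you against. On its own terms, your first paragraph is correct and is exactly the standard argument of the cited Theorem~3.1: Nagata's theorem for the multiplicative set $S=\{x_1^{m_1}\cdots x_n^{m_n}\mid m_i\in\NN_0\}$, triviality of $\mathcal{C}(A_\x)$, and the identification of the relation lattice with the valuation vectors of the units $u\,x_1^{e_1}\cdots x_n^{e_n}$ of $A_\x$ (using $u\in D^\times\subseteq A^\times$) yield $\mathcal{C}(A)\cong\ZZ^t/\langle \mathbf{a}_i\mid i\in[1,n]\rangle$. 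The upper bound $|D|$ in the second assertion is also fine.

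The lower bound in the second assertion, however, is the actual content of \cite{GELS19} Theorem~3.2, and your treatment of it has genuine gaps. (a) The approximation property (Theorem \ref{thm:approx}) produces an element $a\in A$ with $v_{\mathfrak{p}_j}(a)=n_j$ and $v_{\mathfrak{q}}(a)\ge 0$ elsewhere, but such an $a$ need not be a prime element of $A_\x$: its divisor away from $\mathfrak{p}_1,\dots,\mathfrak{p}_t$ is a sum $\sum_k m_k[\mathfrak{q}_k]$ whose total class is $c$, and extracting from this even \emph{one} height-$1$ prime lying exactly in the class $c$ already requires a construction you have not given. (b) The step from one $P_0$ to $|D|$ pairwise non-associated primes is precisely the hard part, and naming ``a Bertini-type irreducibility argument'' does not discharge it: $D$ is an arbitrary (factorial) domain rather than a field, irreducibility in $D[x_1^{\pm 1},\dots,x_n^{\pm 1}]$ involves content and unit issues beyond geometric irreducibility over $\mathbf{q}(D)$, one must choose $Q$ so that both irreducibility of $P_0+dQ$ and the full valuation profile $v_{\mathfrak{p}_j}(P_0+dQ)=v_{\mathfrak{p}_j}(P_0)$ for all $j$ survive, and one must actually bound the cardinality of the bad set of parameters $d$ below $|D|$; none of this is carried out. (c) The $n=1$ case as written does not work: the $|D|$ height-$1$ primes of $D$ extend to height-$1$ primes of $A_\x$, but each contracts to a prime of $A$ lying in the class $-\sum_j v_{\mathfrak{p}_j}(q)[\mathfrak{p}_j]$, which depends on $q$, and no mechanism is offered that places $|D|$ of them in the single prescribed class of $P_0$ (classes of primes do not combine multiplicatively into classes of primes). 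So the presentation $\mathcal{C}(A)\cong\ZZ^t/\langle\mathbf{a}_i\rangle$ is proved, but the statement that every class contains $|D|$ height-$1$ primes is not.
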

	
	We are now ready to prove our main theorem on the class groups of upper cluster algebras that are Krull domains.
	
	\begin{theorem}\label{thm:classgroupupper}
		Let $\Sigma=(\x,\y, B)$ be a seed and let $\UU=\UU(\Sigma)$ be the upper cluster algebra associated to $\Sigma$. 
		Suppose that $\UU$ is a Krull domain. Let $t\in \NN_0$ denote the number of height-$1$ prime ideals that contain one of the exchangeable variables $x_1,\cdots,x_n$. Then the class group $\mathcal{C}(\UU)$ of $\UU$ is a free abelian group of rank $t-n$. 
		
		In particular, each class contains exactly $|K|$ height-1 prime ideals.
	\end{theorem}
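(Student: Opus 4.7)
The plan is to apply Theorem~\ref{thm:classgroupgeneralcase} to $A = \UU$ with subring $D = K[\y^{\pm 1}]$ and distinguished elements $x_1, \ldots, x_n$. First I would verify the hypotheses: the localization $\UU[x_1^{-1}, \ldots, x_n^{-1}]$ equals $K[\x^{\pm 1}, \y^{\pm 1}] = D[\x^{\pm 1}]$, since $\UU \subseteq \mathcal{L}_\x = K[\x^{\pm 1}, \y^{\pm 1}]$ by the Laurent phenomenon and conversely $K$, $\x$, and $\y^{\pm 1}$ all lie in $\UU$ (the frozen variables being invertible by our standing assumption). Moreover, $D[\x^{\pm 1}]$ is a factorial Laurent polynomial ring over the UFD $D$. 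Since $\UU$ is a Krull domain by hypothesis, Theorem~\ref{thm:classgroupgeneralcase} then yields
\[
\mathcal{C}(\UU) \cong \ZZ^t / \langle \mathbf{a}_1, \ldots, \mathbf{a}_n \rangle,
\]
where $\mathbf{a}_i = (a_{ij})_{j \in [1,t]}$ records the exponents in the divisorial factorization $x_i\UU = \vprod_{j=1}^t \mathfrak{p}_j^{a_{ij}}$.

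The next step is to identify this quotient with $\ZZ^{t-n}$. The crucial input is Corollary~\ref{cor:valuationsuppclusalg}: for each $i \in [1, n]$ there is a height-$1$ prime $\mathfrak{q}_i \in \mathfrak{X}(\UU)$ with $v_{\mathfrak{q}_i}(x_i) = 1$ and $v_{\mathfrak{q}_i}(x_j) = 0$ for $j \ne i$. These $n$ primes are pairwise distinct (the valuations of $x_1, \ldots, x_n$ already separate them) and each contains some $x_i$, so they appear among $\mathfrak{p}_1, \ldots, \mathfrak{p}_t$. After relabelling so that $\mathfrak{p}_i = \mathfrak{q}_i$ for $i \in [1, n]$, the $n \times n$ block $(a_{ij})_{i,j \in [1,n]}$ becomes the identity matrix, so the vectors $\mathbf{a}_1, \ldots, \mathbf{a}_n$ can be used to eliminate the first $n$ coordinates of $\ZZ^t$, and $\mathcal{C}(\UU) \cong \ZZ^{t-n}$ follows.

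For the cardinality assertion I would invoke the second part of Theorem~\ref{thm:classgroupgeneralcase}. The ring $D = K[\y^{\pm 1}]$ is infinite with $|D| = |K|$, and a quick count of irreducibles in $D$ (linear polynomials when $m \ge 1$, or integer primes when $m = 0$ and $K = \ZZ$) shows that $D$ has $|D|$ height-$1$ primes, covering even the edge case $n = 1$; note that $n = 1$ with $m = 0$ over a field is excluded by Remark~\ref{rmk:isolated}. The main technical point of the argument is producing the identity block via Corollary~\ref{cor:valuationsuppclusalg}; once that is in place, everything else is an assembly of the results cited above.
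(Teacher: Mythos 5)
Your argument is correct and follows essentially the same route as the paper: apply Theorem~\ref{thm:classgroupgeneralcase} to obtain the presentation $\ZZ^t/\langle\mathbf{a}_1,\dots,\mathbf{a}_n\rangle$, then use Corollary~\ref{cor:valuationsuppclusalg} to produce an $n\times n$ identity block in the relation matrix that eliminates $n$ generators. The only cosmetic difference is that the paper inverts all $n+m$ variables and takes $D=K$, so its case analysis for the ``$|K|$ primes in each class'' claim runs on $n+m$ rather than $n$ and automatically covers the degenerate case $n=0$, which your choice $D=K[\y^{\pm1}]$ leaves to a one-line direct check.
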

	\begin{proof}       
		The proof follows the strategy of \cite[Theorem A]{GELS19} where they proved the statement for cluster algebras that are Krull domains.
		
		In order to apply Theorem \ref{thm:classgroupgeneralcase}, notice that the Laurent phenomenon implies that $$\UU[\x^{-1},\y^{-1}]=K[\x^{\pm 1},\y^{\pm 1}]$$ and $K[\x^{\pm 1},\y^{\pm 1}]$ is a factorial domain. $\UU$ is a Krull domain by assumption, hence any principal ideal is a divisorial product of height-$1$ prime ideals, in particular, for every $i\in [1,n]$ there exist $a_{i1},\dots,a_{it}\in \NN_0$ such that \begin{equation}\label{eq:2}x_i\UU={\vprod_{l=1}^t} \mathfrak{p}_j^{a_{ij}}, \end{equation} with $\mathfrak{p}_1,\dots, \mathfrak{p_t}$ pairwise distinct height-$1$ prime ideals of $\UU$ that contain one of the elements $x_1,\dots,x_n$ and $a_{ij}=v_{\mathfrak{p}_j}(x_i)$. Thus Theorem \ref{thm:classgroupgeneralcase} implies that $C(\UU)$ is generated by $[\mathfrak{p}_1],\dots,[\mathfrak{p}_t]$. 
		
		By Corollary \ref{cor:valuationsuppclusalg}, for every $i\in [1,n]$ there exists $\mathfrak{p}\in \mathfrak{X}(\UU)$ such that $v_{\mathfrak{p}}(x_i)=1$ and $v_{\mathfrak{p}}(x_j)=0$ for every $j\ne i$. Thus, since $\mathfrak{p}_1,\dots, \mathfrak{p_t}$ are all the height-1 prime ideals that contain $x_i$, there exists $k_i\in [1,t]$ such that $v_{\mathfrak{p}_{k_i}}(x_i)=a_{ik_i}=1$ and $v_{\mathfrak{p}_{k_i}}(x_j)=a_{jk_i}=0$ for every $j\in [1,n]\setminus \{i\}$. Hence, by \eqref{eq:2}, $$0=a_{i1}[\mathfrak{p}_{1}]+\cdots+[\mathfrak{p}_{k_i}]+\cdots+a_{it}[\mathfrak{p}_{t}],$$  that is $[\mathfrak{p}_{k_i}]$ is a linear combination of $[\mathfrak{p}_1],\dots,[\mathfrak{p}_t]$. If $i,j\in[1,n]$ with $j\ne i$, then $\mathfrak{p}_{k_i}\ne \mathfrak{p}_{k_{j}}.$ Indeed, if they were equal,  Corollary \ref{cor:valuationsuppclusalg} would imply $1=a_{ik_i}=a_{ik_{j}}=0$ and this would be a contradiction. Thus $\mathfrak{p}_{k_1},\dots,\mathfrak{p}_{k_n}$ are $n$ superfluous generators and $C(\UU)$ is a free abelian group generated by $t-n$ elements. 
		
		If $n+m\ge 2$, or $n+m=1$ and $K=\ZZ$, we can apply Theorem \ref{thm:classgroupgeneralcase} to obtain that every class contains exactly $|K|$ height-1 prime ideals. Suppose then $n+m=1$ and $K$ is a field. Since we assumed there is no isolated exchangeable index, necessarily $n=0$ and $m=1$. Then $\UU=K[x_1^{\pm 1}]$, $\mathcal{C}(\UU)=0$ and $\UU$ contains $|K|$ pairwise non-associated prime elements.
	\end{proof}
	
	This theorem leads us to a dichotomy between factorial upper cluster algebras and non-factorial ones.
	Let $A$ be a domain and $a \in A^\bullet$.
	We call $k \ge 0$ a \defit{length} of $a$ if there exist atoms $u_1,\dots,u_k \in A^\bullet$ such that $a=u_1\cdots u_k$.
	The \defit{length set} of $a$, denoted by $\mathsf L(a)$, is the set of all such lengths; we set $\mathsf L(a)=\{0\}$ for $a \in A^\times$.
	Then $\mathsf L(a) = \{0\}$ if and only if $a$ is a unit, and $\mathsf L(a) = \{1\}$ if and only if $a$ is an atom. In a Krull domain $\mathsf L(a)$ is always a finite set.
	
	\begin{corollary}\label{cor:setlenghts}
		Let $\UU$ be an upper cluster algebra. Assume that $\UU$ is a Krull domain.
		
		\begin{itemize}
			\item If $\UU$ is factorial, then $\mathsf L(u)$ is a singleton for each non-zero element $u \in \UU.$
			\item If $\UU$ is not factorial, then for every finite set $L \subseteq \ZZ_{\ge 2}$ there exists an element $u \in \UU^\bullet$ such that $\mathsf L(u) = L$.
		\end{itemize}		 
	\end{corollary}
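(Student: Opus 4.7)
The proposal is to treat the two cases separately. The first bullet is immediate from the definition of factoriality (Definition \ref{def:factorial}): if every non-zero non-unit factors uniquely into atoms up to order and associates, then all factorization lengths of a given element agree, so $\mathsf L(u)$ is a singleton for each $u \in \UU^\bullet$ (with $\mathsf L(u) = \{0\}$ for units and $\mathsf L(u) = \{1\}$ for atoms by convention).

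For the second bullet my plan is to invoke a theorem of Kainrath from factorization theory: if $H$ is a Krull monoid with infinite class group in which every class contains at least one prime divisor, then every finite subset $L \subseteq \ZZ_{\ge 2}$ is realized as the length set of some element of $H$ (this is recorded, for instance, in \cite{GH06}). The work then reduces to verifying these hypotheses for the monoid $\UU^\bullet$. Since $\UU$ is a Krull domain by assumption, $\UU^\bullet$ is a Krull monoid whose divisor class group is $\mathcal{C}(\UU)$. Because $\UU$ is not factorial, the Krull characterization of factoriality forces $\mathcal{C}(\UU) \ne 0$; by Theorem \ref{thm:classgroupupper} it is a free abelian group of rank $t - n \ge 1$, and hence infinite. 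The same theorem asserts that each class contains exactly $|K|$ height-$1$ prime ideals, and since $K$ is either $\ZZ$ or a field of characteristic zero, $|K|$ is infinite, so in particular every class contains (at least) one prime divisor. Kainrath's theorem then applies and produces, for each finite $L \subseteq \ZZ_{\ge 2}$, an element $u \in \UU^\bullet$ with $\mathsf L(u) = L$.

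The only genuine obstacle is to quote Kainrath's theorem with the correct set of hypotheses; both inputs (infinitude of $\mathcal{C}(\UU)$ and existence of prime divisors in every class) are already packaged in Theorem \ref{thm:classgroupupper}, so no additional calculations are required to pass from the structural statement on the class group to the length-set realisation statement.
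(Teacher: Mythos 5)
Your proposal is correct and follows essentially the same route as the paper, which disposes of the factorial case as trivial and cites Theorem \ref{thm:classgroupupper} together with Kainrath's theorem for the non-factorial case. You merely spell out the details the paper leaves implicit, namely that non-factoriality plus freeness forces the class group to be infinite and that $|K|$ being infinite guarantees a prime divisor in every class; both verifications are accurate.
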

	\begin{proof}
		If $\UU$ is factorial the claim is trivial. If $\UU$ is not factorial, the claim follows by Theorem \ref{thm:classgroupupper} and a result of Kainrath \cite[Theorem 1]{K99}.
	\end{proof}
	
Now we compute the rank of the class group of a full rank upper cluster algebra in terms of the irreducible factors of the exchange polynomials.
		
		 Let $\Sigma=(\x,\y,B)$ be a seed and $\UU=\UU(\Sigma)$ be the upper cluster algebra associated to $\Sigma.$ Recall that, by definition, the upper cluster algebra $\UU$ is an intersection of some $\mathcal{L}_\mathbf{z}$, where $\mathcal{L}_\mathbf{z}=K[u^{\pm 1}\mid u\in \mathbf{z}\cup \y]$  is the localization of $\UU$ to the set $S_\mathbf{z}=\{\,z_{1}^{a_1}\cdots z_n^{a_n} x_{n+1}^{a_{n+1}}\cdots x_{n+m}^{a_{n+m}}\mid a_i\in \NN _0\,\},$ where $\mathbf{z}\sim \x$. On the other hand if we suppose that $\UU$ is a Krull domain, then $\UU=\bigcap_{\mathfrak{p}\in\mathfrak{X}(\UU)}\UU_{\mathfrak{p}}.$ Moreover, the following equalities holds $$ \UU=\bigcap_{\mathbf{z}\sim \x}\mathcal{L}_{\mathbf{z}}=\bigcap_{\mathbf{z}\sim \x}\bigcap_{\mathfrak{p}\in \mathfrak{X}(\mathcal{L}_{\mathbf{z}})}(\mathcal{L}_\mathbf{z})_\mathfrak{p}=	\bigcap_{\mathbf{z}\sim \x}\bigcap_{\substack{\mathfrak{q}\in \mathfrak{X}(\UU)\\ \mathfrak{q}\cap S_\mathbf{z}= \emptyset}}\UU_\mathfrak{q},$$ where the last equality follows from Proposition \ref{prop:loc.ideals}, since $\mathcal{L}_\mathbf{z}$ is the localization of $\UU$ at $S_{\mathbf{z}}.$ 
	
	\begin{theorem}\label{thm:primeidealupper}
		Let $\Sigma=(\x,\y,B)$ be a seed and $\UU=\UU(\Sigma)$ be the upper cluster algebra associated to $\Sigma.$ Suppose in addition that $\UU$ is a Krull domain. Then for every height-1 prime ideal $\mathfrak{p}$ of $\UU$ there exists a seed $(\mathbf{z},\y,C)$  such that $\mathfrak{p}\mathcal{L}_\mathbf{z}\in \mathfrak{X}(\mathcal{L}_\mathbf{z}).$
		
		Furthermore, if $\mathcal{U}=\bigcap_{i=0}^{n}\mathcal{L}_{\mathbf{z}_i}$ for some seed $(\mathbf{z},\y,C)$, then for every height-1 prime ideal $\mathfrak{p}$ of $\UU$ there exists $k\in [0,n]$ such that $\mathfrak{p}\mathcal{L}_{\mathbf{z}_k}\in \mathfrak{X}(\mathcal{L}_{\mathbf{z}_k}).$
	\end{theorem}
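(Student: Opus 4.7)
The plan is to prove both statements by a single approximation-theoretic argument, starting from the Krull hypothesis on $\UU$. Given a height-1 prime $\mathfrak{p}$ of $\UU$, I would use Theorem \ref{thm:approx} to manufacture a witness element $\alpha \in \mathbf{q}(\UU)$ whose only negative essential valuation on $\UU$ is $v_\mathfrak{p}$, and then trace the obstruction to $\alpha \in \UU$ back to a single seed.

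First, applying the approximation theorem to the single prime $\mathfrak{p}$ with exponent $-1$, produce $\alpha \in \mathbf{q}(\UU)$ satisfying $v_\mathfrak{p}(\alpha) = -1$ and $v_\mathfrak{q}(\alpha) \ge 0$ for every $\mathfrak{q} \in \mathfrak{X}(\UU) \setminus \{\mathfrak{p}\}$. Since $\UU$ is a Krull domain, $\UU = \bigcap_{\mathfrak{q} \in \mathfrak{X}(\UU)} \UU_\mathfrak{q}$, and the strict negativity $v_\mathfrak{p}(\alpha) < 0$ forces $\alpha \notin \UU$.

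Second, for the main statement, use the defining equality $\UU = \bigcap_{\mathbf{z} \sim \x} \mathcal{L}_\mathbf{z}$ to find a seed $(\mathbf{z}, \y, C)$ with $\alpha \notin \mathcal{L}_\mathbf{z}$; for the second part of the theorem, the hypothesis $\UU = \bigcap_{k=0}^n \mathcal{L}_{\mathbf{z}_k}$ lets us take this seed to be one of the finitely many $\mathbf{z}_k$. Since $\mathcal{L}_\mathbf{z}$ is a Laurent polynomial ring, hence factorial and Krull, $\mathcal{L}_\mathbf{z} = \bigcap_{\mathfrak{q}' \in \mathfrak{X}(\mathcal{L}_\mathbf{z})} (\mathcal{L}_\mathbf{z})_{\mathfrak{q}'}$, so there exists $\mathfrak{q}' \in \mathfrak{X}(\mathcal{L}_\mathbf{z})$ with $v_{\mathfrak{q}'}(\alpha) < 0$. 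By Proposition \ref{prop:loc.ideals}, $\mathfrak{q}' = \mathfrak{q}\mathcal{L}_\mathbf{z}$ for a unique height-1 prime $\mathfrak{q}$ of $\UU$ disjoint from $S_\mathbf{z}$.

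Third, the crucial identification is $v_\mathfrak{q} = v_{\mathfrak{q}'}$ on $\mathbf{q}(\UU)$. Both $\UU_\mathfrak{q}$ and $(\mathcal{L}_\mathbf{z})_{\mathfrak{q}'}$ are DVRs in the same quotient field, and the inclusion $\UU \subseteq \mathcal{L}_\mathbf{z}$ together with $\mathfrak{q}' \cap \UU = \mathfrak{q}$ shows $\UU_\mathfrak{q} \subseteq (\mathcal{L}_\mathbf{z})_{\mathfrak{q}'}$: any $u \in \UU \setminus \mathfrak{q}$ lies outside $\mathfrak{q}'$, hence is invertible in $(\mathcal{L}_\mathbf{z})_{\mathfrak{q}'}$. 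Two DVRs in the same field with one containing the other must coincide, so the discrete valuations agree. Consequently $v_\mathfrak{q}(\alpha) < 0$, which by the construction of $\alpha$ forces $\mathfrak{q} = \mathfrak{p}$. Therefore $\mathfrak{p}\mathcal{L}_\mathbf{z} = \mathfrak{q}' \in \mathfrak{X}(\mathcal{L}_\mathbf{z})$, as required.

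I expect the main obstacle to be the third step, namely the coincidence of the two DVRs. Everything else is a clean combination of the Krull decomposition, the approximation theorem, and the ideal-correspondence from Proposition \ref{prop:loc.ideals}; the substantive point is that the valuation detecting $\alpha \notin \mathcal{L}_\mathbf{z}$ is literally (not merely up to scaling) the $\mathfrak{p}$-adic valuation on $\UU$, which is what allows us to pin down $\mathfrak{q} = \mathfrak{p}$.
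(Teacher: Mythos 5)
Your proposal is correct and is essentially the paper's argument: both use the approximation theorem to build an element of $\mathbf{q}(\UU)$ whose only negative essential valuation is $v_\mathfrak{p}$, and both use the localization correspondence of Proposition \ref{prop:loc.ideals} (with $\UU_\mathfrak{q}=(\mathcal{L}_\mathbf{z})_{\mathfrak{q}\mathcal{L}_\mathbf{z}}$) to identify the valuation obstructing membership in some $\mathcal{L}_\mathbf{z}$ with $v_\mathfrak{p}$. The only difference is presentational — the paper argues by contradiction (assuming $\mathfrak{p}$ meets every $S_\mathbf{z}$ and deducing the witness lies in $\UU$), while you run the same computation directly.
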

	
	\begin{proof} We prove the first claim, the second is completely analogous. 
		Proceed by contradiction, i.e., suppose that there exist $\mathfrak{p}_{0}\in\mathcal{X}(\UU)$ such that $\mathfrak{p}_{0}\mathcal{L}_\mathbf{z}\notin \mathfrak{X}(\mathcal{L}_\mathbf{z})$ for every seed $(\mathbf{z},\y,C).$ We claim that there exists an element $a\in \UU$ such that $v_{\mathfrak{p}_{0}}(a)<0.$
		
		Let $y\in \mathfrak{p}_{0},$ and let $\mathfrak{p}_1,\dots,\mathfrak{p}_r\in \mathfrak{X}(\UU)$ be such that $\{\mathfrak{p}_{0},\mathfrak{p}_{1},\dots,\mathfrak{p}_{r}\} $ is the set of all the distinct height-1 prime ideals of $\UU$ that contain $y$. Notice that $v_{\mathfrak{p}_i}(y)>0$ for every $i\in [0,r]$ and $v_{\mathfrak{q}}(y)=0$ for every $\mathfrak{q}\ne \mathfrak{p}_i$. 
		Set $e_i:=v_{\mathfrak{p}_i}(y)\in \NN$ for every $i\in[1,r]$ and $e_{0}=0$. By the Approximation property (Theorem \ref{thm:approx}) there exists an element $x\in \UU$ such that $ v_{\mathfrak{p}_i}(x)= e_i$ for every $i\in [0,r]$ and $v_\mathfrak{q}(x)\ge 0$ for every $\mathfrak{q}\in \mathfrak{X}(\UU)\setminus \{\mathfrak{p}_0,\dots,\mathfrak{p}_r\}.$ 
		
		Define now $a:=x/y\in \mathbf{q}(\UU).$ By construction  then $$v_{\mathfrak{p}_{0}}(a)=v_{\mathfrak{p}_{0}}(1/y)<0, \,\,v_{\mathfrak{p}_{i}}(a)=v_{\mathfrak{p}_{i}}(x)-v_{\mathfrak{p}_{i}}(y)=0$$  for every $i\in[1,r]$ and $v_\mathfrak{q}(a)=v_\mathfrak{q}(x)\ge 0$ for every $\mathfrak{q}\in \mathfrak{X}(\UU)\setminus\{\mathfrak{p}_{0},\mathfrak{p}_{1},\dots,\mathfrak{p}_{r}\}.$ Hence this implies that $a\in \UU_\mathfrak{q}$ for every $\mathfrak{q} \in \mathfrak{X}(\UU)\setminus \{\mathfrak{p}_0\}$ and  $a\notin \UU_{\mathfrak{p}_0}$, hence $a\notin \UU$. On the other hand, by assumption $\mathfrak{p}_0\cap S_\mathbf{z}\ne \emptyset$ for every seed $(\mathbf{z},\y,C)$, hence $$a\in \bigcap_{\mathbf{z}\sim \x}\bigcap_{\substack{\mathfrak{q}\in \mathfrak{X}(\UU)\\ \mathfrak{q}\cap S_\mathbf{z}=\emptyset}}\UU_\mathfrak{q}=\UU,$$ therefore we found our contradiction.
	\end{proof}
	
	The theorem has an important consequence if we consider the case of upper cluster algebras that satisfy the starfish condition at one seed.

	\begin{corollary}\label{cor:primeupper}
		Let $\UU$ be an upper cluster algebra that satisfies the starfish condition at a seed $(\x,\y,B)$. Let $\mathfrak{p}$ be a height-1 prime ideal of $\UU$ that contains $x_i$ for some $i\in[1,n]$. Then $\mathfrak{p}\mathcal{L}_{\x_i}$ is a height-1 one prime ideal of $\mathcal{L}_{\x_i}$. 
		In particular, every height-1 prime ideal of $\UU$ that contains the cluster variable $x_i$ does not contain any element of the set $\{x_1,\dots,x_i',\dots, x_{n+m}\}.$
	\end{corollary}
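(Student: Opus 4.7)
The plan is to deduce the corollary from the second part of Theorem \ref{thm:primeidealupper}, pinning down the relevant mutation direction by a bookkeeping argument based on the prime-ideal correspondence for localizations.

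First, I note that the starfish condition at $(\x,\y,B)$ implies, by Theorem \ref{prop:cickrull}, that $\UU$ is a Krull domain, so the hypotheses of Theorem \ref{thm:primeidealupper} are satisfied. The starfish assumption also writes $\UU = \bigcap_{k=0}^n \mathcal{L}_{\x_k}$ with the convention $\x_0 := \x$, so the second assertion of Theorem \ref{thm:primeidealupper} yields some index $k \in [0,n]$ such that $\mathfrak{p}\mathcal{L}_{\x_k}$ is a height-$1$ prime ideal of $\mathcal{L}_{\x_k}$.

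The main step is to show that the index $k$ above is forced to be $i$. By Proposition \ref{prop:loc.ideals}, the property $\mathfrak{p}\mathcal{L}_{\x_k} \in \mathfrak{X}(\mathcal{L}_{\x_k})$ is equivalent to $\mathfrak{p}\cap S_{\x_k}=\emptyset$. Now, for any $k \in [0,n]$ with $k \neq i$, the variable $x_i$ still appears in the cluster $\x_k$ (mutation in direction $k\ne i$ leaves the $i$-th entry untouched, and $\x_0=\x$ of course contains $x_i$), hence $x_i \in S_{\x_k}$. Since $x_i \in \mathfrak{p}$, this forces $\mathfrak{p}\cap S_{\x_k}\neq\emptyset$ for every $k \neq i$, so only $k=i$ can satisfy the conclusion of Theorem \ref{thm:primeidealupper}. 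This proves that $\mathfrak{p}\mathcal{L}_{\x_i}\in\mathfrak{X}(\mathcal{L}_{\x_i})$.

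The \emph{in particular} clause then follows immediately: the equality $\mathfrak{p}\cap S_{\x_i}=\emptyset$, together with primality of $\mathfrak{p}$, rules out every multiplicative generator of $S_{\x_i}$ from $\mathfrak{p}$, i.e., none of $x_1,\dots,x_i',\dots,x_{n+m}$ lies in $\mathfrak{p}$ (the frozen variables are excluded automatically since they are units by Theorem \ref{thm:unit}). I do not foresee a genuine obstacle here beyond correctly tracking the indices: the argument is essentially an elimination, using that the only cluster among $\x_0,\x_1,\dots,\x_n$ from which $x_i$ has been removed is $\x_i$ itself.
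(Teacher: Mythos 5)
Your proof is correct and follows essentially the same route as the paper: invoke the second assertion of Theorem \ref{thm:primeidealupper} (valid since the starfish condition gives the Krull property via Theorem \ref{prop:cickrull}) to obtain an index $k$ with $\mathfrak{p}\mathcal{L}_{\x_k}\in\mathfrak{X}(\mathcal{L}_{\x_k})$, and then observe that $x_i\in\mathfrak{p}$ forces $k=i$ because $x_i$ belongs to $S_{\x_k}$ for every $k\ne i$. The paper phrases this elimination as $\mathfrak{p}\cap\{x_1,\dots,x_k',\dots,x_{n+m}\}=\emptyset$, which is exactly your bookkeeping step.
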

	
	\begin{proof}
		By Theorem \ref{thm:primeidealupper} there exists $k\in [0,n]$  such that $\mathfrak{p}\mathcal{L}_{\x_k}\in \mathfrak{X}(\mathcal{L}_{\x_k}).$ In particular $\mathfrak{p}\cap \{x_1,\dots,x_k',\dots,x_{n+m}\}=\emptyset.$ Therefore $k$ must be $i$ and $x_j\notin \mathfrak{p}$ for every $j\in [1,n+m]\setminus\{i\}.$ 
	\end{proof}
	
	We are now ready to determine the rank of the class group of an upper cluster algebra that satisfies the starfish condition at one seed.
	\begin{theorem}\label{thm:rankclassgroup}
		Let $\UU$ be an upper cluster algebra that satisfies the starfish condition at a seed $(\x,\y,B)$. For every $i\in [1,n]$ let $l_i$ be the number of irreducible factors of the exchange polynomial $f_i$. Then $$\mathcal{C}(\UU)\cong \ZZ^r, \, \, \text{with}\, \, r=\sum_{i=1}^n l_i - n.$$
	\end{theorem}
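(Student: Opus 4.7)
The strategy combines Theorem~\ref{thm:classgroupupper} with an explicit count of the height-$1$ primes of $\UU$ through the exchangeable variables. By Theorem~\ref{prop:cickrull}, the starfish condition at $(\x,\y,B)$ forces $\UU$ to be a Krull domain, so Theorem~\ref{thm:classgroupupper} already gives $\mathcal{C}(\UU) \cong \ZZ^{t-n}$, where $t$ is the number of height-$1$ prime ideals of $\UU$ meeting $\{x_1,\dots,x_n\}$. The task thus reduces to proving the identity $t = \sum_{i=1}^n l_i$.

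For each $i \in [1,n]$ I set $T_i := \{\mathfrak{p} \in \mathfrak{X}(\UU) \mid x_i \in \mathfrak{p}\}$. Corollary~\ref{cor:primeupper} tells us that every $\mathfrak{p} \in T_i$ is disjoint from $\{x_1,\dots,x_i',\dots,x_{n+m}\}$, hence avoids the multiplicative set $S_{\x_i}$ entirely; in particular, no height-$1$ prime of $\UU$ lies in two distinct $T_j$'s. Therefore $t = \sum_{i=1}^n |T_i|$, and it remains to show $|T_i| = l_i$ for every $i$.

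To compute $|T_i|$ I would pass to the localization $\mathcal{L}_{\x_i} = S_{\x_i}^{-1}\UU$ (this identification follows from the definition of $\mathcal{L}_{\x_i}$ together with $\UU \subseteq \mathcal{L}_{\x_i}$). By Proposition~\ref{prop:loc.ideals} the map $\mathfrak{p} \mapsto \mathfrak{p}\mathcal{L}_{\x_i}$ is a height-preserving bijection between primes of $\UU$ disjoint from $S_{\x_i}$ and primes of $\mathcal{L}_{\x_i}$, and by the previous paragraph this restricts to a bijection between $T_i$ and the set of $\mathfrak{q} \in \mathfrak{X}(\mathcal{L}_{\x_i})$ containing $x_i$. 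Now $\mathcal{L}_{\x_i} = K[\x_i^{\pm 1},\y^{\pm 1}]$ is a factorial Laurent polynomial ring, so its height-$1$ primes through $x_i$ correspond to the non-associate irreducible factors of $x_i$. The exchange relation $x_i x_i' = f_i$ together with $x_i' \in \mathcal{L}_{\x_i}^\times$ gives $x_i = f_i\,(x_i')^{-1}$, so these factors are precisely the irreducible factors of $f_i$ in $\mathcal{L}_{\x_i}$. Because $f_i$ is a sum of two monomials with disjoint supports it has no monomial divisor, so its irreducible decomposition in $\mathcal{L}_{\x_i}$ matches the one in $K[\x_i,\y]$; combined with the squarefreeness of $f_i$ from \cite[Proposition~2.3]{GELS19}, this yields exactly $l_i$ non-associate prime factors, so $|T_i| = l_i$.

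Putting everything together, $t = \sum_{i=1}^n l_i$ and hence $\mathcal{C}(\UU) \cong \ZZ^{\sum_{i=1}^n l_i - n}$, as claimed. The main technical obstacle is the last paragraph: one has to match the a priori combinatorially defined integer $l_i$ (number of irreducible factors of $f_i$ as a polynomial) with the number of height-$1$ primes of the Laurent localization $\mathcal{L}_{\x_i}$ through $x_i$. This requires the two-monomial shape of $f_i$ (to rule out monomial divisors that become units after inverting the $\x_i$) and the squarefreeness from \cite[Proposition~2.3]{GELS19} (so that distinct irreducible factors give distinct primes); the remaining steps are formal bookkeeping via Proposition~\ref{prop:loc.ideals} and Corollary~\ref{cor:primeupper}.
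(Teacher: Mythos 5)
Your proposal is correct and follows essentially the same route as the paper: reduce via Theorem~\ref{prop:cickrull}, Theorem~\ref{thm:classgroupupper}, and Corollary~\ref{cor:primeupper} to counting the height-$1$ primes through each $x_i$, then identify these with the non-associated irreducible factors of $f_i$ inside the factorial localization $\mathcal{L}_{\x_i}$ (using that $x_i'$ is a unit there and that $f_i$ has no monomial factor). The only cosmetic difference is that you package the two explicit containments of the paper's proof as a single restriction of the localization bijection of Proposition~\ref{prop:loc.ideals}; also note that squarefreeness of $f_i$ is not what makes distinct factors give distinct primes (non-associatedness in a UFD already does), it merely makes the count $l_i$ unambiguous.
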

	\begin{proof}
		Theorem \ref{prop:cickrull} implies that $\UU$ is a Krull domain, so we can apply Theorem \ref{thm:classgroupupper} and claim that $\mathcal{C}(\UU)$ is a free abelian group of rank $t-n$ with $t$ the number of prime ideals that contain one of the cluster variables $x_1,\dots, x_n$. 		By Corollary \ref{cor:primeupper}, if a height-1 prime ideal of $\UU$ contains a cluster variable $x_i$, then it does not contain $x_j$ for every $j\in [1,n]\setminus\{i\}$, that is $$t=\sum_{i=1}^n |\{\, \mathfrak{p}\in \mathfrak{X}(\UU)\mid x_i\in \mathfrak{p}\,\}|.$$ Fix an index $i\in[1,n]$, and let $r_1,\dots,r_{l_i}\in K[\x_{i},\y]$ be the pairwise non-associated irreducible factors of the exchangeable polynomial $f_i=x_ix_i'\in K[\x_i,\y]. $ We claim that $$l_i=|\{\, \mathfrak{p}\in \mathfrak{X}(\UU)\mid x_i\in \mathfrak{p}\,\}|.$$ 
		
		Let $\mathfrak{p}\in \mathfrak{X}(\UU)$ be a height-1 prime ideal that contains the cluster variable $x_i$. Corollary \ref{cor:primeupper} implies that   $\mathfrak{p}':=\mathfrak{p}\mathcal{L}_{\x_i}$ is a height-1 prime ideal of $\mathcal{L}_{\x_i}$ that contains $x_{i}$. 
		Observe that $f_i\in \mathfrak{p}'$. Since none of the factors of $f_i$ can be a monomial in $\x$, Lemma \ref{lemma:loc.irr} implies that, for all $k\in[1,l_i]$, $r_k$ is also irreducible in $\mathcal{L}_{\x_i} .$ The ideal $\mathfrak{p}'$ is prime, hence necessarily one of the irreducible factors of $f_i$, say $r_k$, must be in $\mathfrak{p}'.$ Therefore, since $\mathfrak{p}'$ has height $1$ and $\mathcal{L}_{\x_i}$ is factorial, we have that $\mathfrak{p}'=r_k\mathcal{L}_{\x_i},$ and hence $\mathfrak{p}=\UU\cap r_k\mathcal{L}_{\x_i}$. This proves that $$|\{\, \mathfrak{p}\in \mathfrak{X}(\UU)\mid x_i\in \mathfrak{p}\,\}|\le l_i.$$
		
		A similar argument shows that $\mathfrak{p}_j:=\UU\cap r_j \mathcal{L}_{\x_i}$ is a height-1 prime ideal of $\UU$ that contains $x_i$, for every  $j\in [1,l_i]$. Assume $\mathfrak{p}_j=\mathfrak{p}_k$ for some $j\ne k$. Then $r_j \mathcal{L}_{\x_i}=r_k \mathcal{L}_{\x_i}$ and hence $r_j\simeq r_k$, that is impossible by assumption, whence \[ l_i=|\{\, \mathfrak{p}\in \mathfrak{X}(\UU)\mid x_i\in \mathfrak{p}\,\}|. \qedhere\]
	\end{proof}
		If $l_i=1$ for every $i\in[1,n]$, we get immediately the following corollary, which is a slight generalization of \cite[Theorem 4.9]{CKQ22}.
\begin{corollary}
			Let $\UU$ be an upper cluster algebra that satisfies the starfish condition at a seed $(\x,\y,B)$. Then $\UU$ is factorial if and only if the exchange polynomials $f_i=x_ix_i'$ are irreducible.  
	\end{corollary}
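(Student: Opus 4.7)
The plan is to read off the corollary directly from Theorem~\ref{thm:rankclassgroup} together with the well-known equivalence of factoriality and triviality of the class group for Krull domains.

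First, I would observe that since $\UU$ satisfies the starfish condition at $(\x,\y,B)$, Theorem~\ref{prop:cickrull} guarantees that $\UU$ is a Krull domain. Consequently, by the characterization of factorial domains among Krull domains (recalled just after Theorem~\ref{thm:krull} in the preliminaries), $\UU$ is factorial if and only if $\mathcal{C}(\UU)=0$.

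Next, I would apply Theorem~\ref{thm:rankclassgroup}, which yields $\mathcal{C}(\UU)\cong \ZZ^{r}$ with $r=\sum_{i=1}^{n}l_i-n$, where $l_i$ is the number of irreducible factors of the exchange polynomial $f_i=x_ix_i'$. Hence $\mathcal{C}(\UU)=0$ if and only if $\sum_{i=1}^n l_i=n$.

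Finally, I would note that each $l_i\ge 1$: by the assumption that there are no isolated exchangeable indices (Remark~\ref{rmk:isolated}), $f_i$ is a non-unit polynomial in $K[\x,\y]$, so it has at least one irreducible factor. Therefore $\sum_{i=1}^{n}l_i=n$ forces $l_i=1$ for every $i\in[1,n]$, i.e., each exchange polynomial $f_i$ is irreducible. Conversely, if every $f_i$ is irreducible, then each $l_i=1$ and the sum equals $n$, giving $\mathcal{C}(\UU)=0$. Since every step is an equivalence, this chain of if-and-only-ifs delivers the corollary. There is no serious obstacle here; the entire argument is a packaging of Theorem~\ref{thm:rankclassgroup}, and the only minor point to verify carefully is that the no-isolated-index assumption prevents the degenerate case $l_i=0$.
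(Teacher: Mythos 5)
Your proposal is correct and matches the paper's intended argument, which derives the corollary immediately from Theorem~\ref{thm:rankclassgroup} by observing that $\mathcal{C}(\UU)\cong\ZZ^{r}$ vanishes exactly when $\sum_{i=1}^{n}l_i=n$, i.e., when every $l_i=1$. Your explicit check that $l_i\ge 1$ via the no-isolated-index assumption is a worthwhile detail the paper leaves implicit.
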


	Notice we cannot extend this theorem to upper cluster algebras that are just Krull domains. Indeed, consider the (upper) cluster algebra of finite type $A_3.$ The cluster algebra is a Krull domain and we know that is not factorial ($1+x_{2}^{2}=x_{1}x_{1}'=x_{3}x_{3}'$), hence $\mathcal{C}(\UU)\ne 0.$ However,  $\sum_{i=1}^3 l_i -3=0,$ so the theorem does not apply in this case. 
	
	\begin{remark}
		If the seed is in addition acyclic, the cluster algebra and the upper cluster algebra coincide and we could get the conclusions of Theorem \ref{thm:rankclassgroup} as a consequence of the main Theorem of \cite{GELS19}.
		Two indices $i$,~$j\in [1,n]$ are partners if $f_i$ and $f_j$ have a common non-trivial factor. If $\,\mathcal{U}$ satisfies the starfish condition at a seed $(\x,\y,B)$, then no distinct indices are partners. Indeed, first notice that, if $i$ and $j$ are partners, then $(x_1,\dots,x_i',\dots,x_j',\dots,x_n)$ is a seed, since $b_{ij}=b_{ji}=0$ (cf. \cite[Lemma 2.7]{GELS19}). Suppose now, by contradiction, that there exist two partners $i,j\in [1,n]$ with $i< j$. Thus there exists $h\in K[\x,\y]\setminus K[\x,\y]^\times$ such that $f_i=hg_i$ and $f_j=hg_j$ for some $g_i,g_j\in K[\x,\y]$. Hence the element $s=\frac{hg_ig_j}{x_ix_j}\in \bigcap_{k=0}^n\mathcal{L}_{\x_k},$ but $s\notin K[x_1^{\pm 1},\dots,{x_i'}^{\pm 1},\dots,{x_j'}^{\pm 1},\dots,x_n^{\pm 1}]$, and this is a contradiction. Assume in addition that the (upper) cluster algebra $\,\UU$ is acyclic. Then, since the only partner sets are the sets $\{i\}$, $i\in[1,n] $, Theorem A in \cite{GELS19} implies that the rank of the class group of $\,\UU$ is equal to $t-n$ with $t=\sum_{i=1}^n |\{\, \mathfrak{p}\in \mathfrak{X}(\UU)\mid x_i\in \mathfrak{p}\,\}|$.
		
	\end{remark}
	Denote by $\mu_{d}^*(K)$ the set of primitive $d$-th roots of unit in $K$.
	\begin{examples}
		Let us compute the class group in some specific examples.
		\begin{enumerate}
			\item Consider the seed $\Sigma=\left(\left(x_1,x_2,x_3,x_4\right),\emptyset,B\right)$ where $B$ is the matrix
			\begin{equation*}
			B=\begin{pmatrix}
			0 & -1 & 0 & 4 \\
			2 & 0 & 3 & 6  \\
			0 & -3 & 0 & 0 \\
			-4 & -3 & 0 & 0
			\end{pmatrix}.
			\end{equation*}
			$B$ is a full rank skew-symmetrizable matrix (consider $d_1=d_4=2,\, d_2=d_3=1$).
			
			The exchange polynomials associated to $\Sigma$ are:
			\begin{gather*}
			f_1=x_2^2+x_4^4,\quad f_2=x_1x_3^3x_4^3+1,\\
			f_3=x_2^3+1, \quad f_4=x_1^4x_2^6+1.
			\end{gather*}
			The polynomial $f_1$ has 2 factors if $\mu_4^*(K)\ne\emptyset$, otherwise 1, the polynomial $f_3$ has 3 factors if $\mu_6^*(K)\ne\emptyset$, otherwise 2, and the polynomial $f_4$ has 2 factors if $\mu_4^*(K)\ne\emptyset$, otherwise 1. The following table shows the class group of $\UU$ in all the possible cases:
			\renewcommand\arraystretch{1.3}
			\begin{table}[ht]
				\begin{tabular}{c|c|c}
					
					& $\mu_4^*(K)\ne\emptyset$ & $\mu_4^*(K)=\emptyset$ \tabularnewline
					\hline 
					$\mu_6^*(K)\ne\emptyset$ & $\ZZ^4$ & $\ZZ^2$ \tabularnewline
					\hline
					$\mu_6^*(K)=\emptyset$ & $\ZZ^3$ & $\ZZ$ \tabularnewline
				\end{tabular}
			\end{table}
		Notice that $U(\Sigma)$ is not factorial independently from the choice of the field.
			\item Consider the following quiver $\mathcal{Q}$: 
			\begin{equation*}
			\begin{tikzcd}[ampersand replacement=\&, column sep=10 pt, row sep=6 pt]
			\&	1\arrow[ddl] \& \\
			\&  2 		\& \\ 
			3 \arrow[ur] \arrow[rr] \& \& 4. \arrow[uul] \arrow[ul]
			\end{tikzcd}
			\end{equation*}
			The matrix associated to $\mathcal{Q}$ is 
			\begin{equation*}
			B=\begin{pmatrix}
			0 & 0 & 1 & -1 \\
			0 & 0 & -1 & -1  \\
			-1 & 1 & 0 & 1 \\
			1 & 1 & -1 & 0
			\end{pmatrix}
			\end{equation*}
			and the exchange polynomials are:
			\begin{gather*}
			f_1=x_3+x_4,\quad f_2=x_3x_4+1,\\
			f_3=x_2x_4+x_1, \quad f_4=x_1x_2+x_3.
			\end{gather*}
			$B$ has full rank and the polynomials are irreducible, hence $\mathcal{U}(\mathcal{Q})$ is factorial.
	
			\item Consider the seed $\Sigma=((x_1,x_2,x_3),\{x_4\},B)$ where $B$ is the matrix
			\begin{equation*}
			B=\begin{pmatrix}
			0 & 2 & -2  \\
			-2 & 0 & 2 \\
			2 & -2 & 0  \\
			2 & 0 & 0
			\end{pmatrix}.
			\end{equation*}
			$B$ has full rank and the exchange polynomials are
			$$f_1=x_2^2+x_3^2x_4^2,\quad f_2=x_1^2+x_3^2,\quad f_3=x_1^2+x_2^2.$$
            Hence, if $\mu_4^*(K)\ne\emptyset$, then $\mathcal{C}(\mathcal{U})\cong \ZZ^3$, otherwise $\mathcal{U}$ is factorial.
            
			Observe that the exchangeable part of the quiver $\Gamma(B)$, being the Markov quiver, is not acyclic, hence we could not have applied Theorem B in \cite{GELS19}.
			
		\end{enumerate}
		
	\end{examples}
	
	\section{Valuation Pairing}\label{sec4}
	In \cite{CKQ22}, the authors introduced the notion of a \emph{valuation pairing} on an upper cluster algebra and proved a local unique factorization for full rank upper cluster algebras. In this section we give an interpretation of the valuation pairing in terms of the $\mathfrak{p}$-adic valuation in Krull domains.
	
	\smallskip
	
	\begin{definition}[Valuation pairing,\,\cite{CKQ22}]
		Let $\UU$ be an upper cluster algebra and $\mathcal{X}$ be the set of cluster variables of $\UU.$ Define $$(-|-)_v\colon \mathcal{X}\times \UU \to \NN \cup \{\infty\}\qquad (x,u)\mapsto (x\mid u)_v:=\text{max}\{\,s\in \NN\mid u/x^s \in \UU.\,\}$$
	\end{definition}
	
	\smallskip
	
	Clearly, $(x\mid u)_v=0$ if and only if $x\nmid u.$ If $r=(x\mid u)_v>0$, then $u/x^r\in \UU$ and $u/x^{r+1}\notin \UU$, that is there exists $y\in \UU$ such that $u=x^ry$ and $x \nmid y$.
	
	\begin{definition}[Local Factorization,\,\cite{CKQ22}]
		Let $\UU$ be an upper cluster algebra and $u\in \UU$.  We say that $u=ab $ is a \defit{local factorization} of $u$ with respect to the seed $(\x,\y,B)$ if $a$ is a monomial in $\x$ and $(x_i\mid b)_v=0$ for every $i\in[1,n].$ 
	\end{definition}
	
	Using valuation pairings, Cao, Keller, and Qin proved the following.
	\begin{theorem}[{\cite[Propisition 3.5 and Theorem 3.7]{CKQ22}}]
		Let $\UU$ be an upper cluster algebra and $(\x,\y,B)$ any seed. Then every $0\ne u\in \UU$ admits a local factorization with respect to $(\x,\y,B)$.
		Moreover, if $\UU$ has full rank, then this local factorization is unique \textup(up to associates\textup). 
	\end{theorem}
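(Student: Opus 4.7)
The plan is to split the proof into existence (which holds for any upper cluster algebra) and uniqueness (which uses the full rank hypothesis to invoke Krull-domain structure, notably Corollary \ref{cor:primeupper}).

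For existence, I would construct the factorization iteratively. Set $u_0 = u$ and, for $i = 1,\dots,n$, let $r_i = (x_i \mid u_{i-1})_v$ and $u_i = u_{i-1}/x_i^{r_i}$. Two points need justification. First, each $r_i$ is finite: because $u_{i-1} \in \UU \subseteq \mathcal{L}_{\x_i}$ and $x_i = f_i/x_i'$ is associated in $\mathcal{L}_{\x_i}$ to the exchange polynomial $f_i$ (a non-unit in this UFD), $(x_i \mid u_{i-1})_v$ is bounded above by the $f_i$-adic valuation of $u_{i-1}$ in $\mathcal{L}_{\x_i}$, which is finite for $u_{i-1}\neq 0$. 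Second, I would prove by induction on $i$ that $(x_j \mid u_i)_v = 0$ for every $j \leq i$: the case $j = i$ is by choice of $r_i$, and for $j < i$, if $u_i/x_j \in \UU$ then $u_{i-1}/x_j = x_i^{r_i}(u_i/x_j) \in \UU$, contradicting $(x_j \mid u_{i-1})_v = 0$. After $n$ steps, $u = x_1^{r_1}\cdots x_n^{r_n}\,u_n$ is a local factorization.

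For uniqueness, assume $\UU$ has full rank, so by Theorem \ref{prop:cickrull} it is a Krull domain and the starfish condition holds at $(\x,\y,B)$. Let $u = x^r b = x^{r'} b'$ be two local factorizations, writing $x^r = x_1^{r_1}\cdots x_n^{r_n}$. I would show $(x_i \mid u)_v = r_i$; by symmetry this also equals $r_i'$, forcing $r=r'$ and hence $b = b'$. The inequality $(x_i \mid u)_v \geq r_i$ is immediate. For the reverse, suppose $u/x_i^{r_i+1} \in \UU$; I aim to derive $b/x_i \in \UU$, contradicting $(x_i \mid b)_v = 0$. Using $\UU = \bigcap_{\mathfrak{q}\in\mathfrak{X}(\UU)} \UU_\mathfrak{q}$, it suffices to verify $v_\mathfrak{q}(b) \geq v_\mathfrak{q}(x_i)$ for every height-1 prime $\mathfrak{q}$. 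If $x_i \notin \mathfrak{q}$, then $v_\mathfrak{q}(x_i) = 0$ and there is nothing to show. If $x_i \in \mathfrak{q}$, Corollary \ref{cor:primeupper} (where the starfish condition is essential) gives $v_\mathfrak{q}(x_j) = 0$ for $j \neq i$, so $v_\mathfrak{q}(u) = r_i v_\mathfrak{q}(x_i) + v_\mathfrak{q}(b)$; combining this with $v_\mathfrak{q}(u) \geq (r_i+1)v_\mathfrak{q}(x_i)$, which comes from $u/x_i^{r_i+1} \in \UU_\mathfrak{q}$, yields $v_\mathfrak{q}(b) \geq v_\mathfrak{q}(x_i)$.

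The existence part is essentially bookkeeping once one observes that the $(x_j \mid \cdot)_v = 0$ condition is preserved by further division. The main obstacle is the uniqueness step: it hinges precisely on the fact, peculiar to the full rank (starfish) setting, that a height-1 prime containing some $x_i$ contains no other $x_j$. This disjointness is what lets $v_\mathfrak{q}(u) = r_i v_\mathfrak{q}(x_i) + v_\mathfrak{q}(b)$ split cleanly and turn the hypothesis $u/x_i^{r_i+1}\in\UU$ into the divisibility $x_i \mid b$, which was ruled out by assumption.
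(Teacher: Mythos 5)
Your proposal is correct. Note, however, that the paper does not prove this statement at all: it is quoted verbatim from \cite{CKQ22} (their Proposition 3.5 and Theorem 3.7) and used as a black box, so there is no in-paper proof to compare against. What you have written is a valid, self-contained derivation using the paper's own later machinery rather than the original arguments of Cao--Keller--Qin. The existence half is sound: finiteness of each $(x_i\mid u_{i-1})_v$ via the associate $x_i\simeq f_i$ in the factorial ring $\mathcal{L}_{\x_i}$ (where $f_i$ is a non-unit by the standing assumption on isolated indices, or because $2$ is a non-unit over $\ZZ$) is exactly the right observation, and the induction showing that dividing out $x_i^{r_i}$ preserves $(x_j\mid\cdot)_v=0$ for $j<i$ is correct. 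The uniqueness half correctly chains full rank $\Rightarrow$ starfish at every seed (Theorem \ref{thm:starfish}) $\Rightarrow$ Krull (Theorem \ref{prop:cickrull}), and the key step — that a height-$1$ prime $\mathfrak{q}$ containing $x_i$ contains no other $x_j$, so that $v_{\mathfrak{q}}(u)=r_iv_{\mathfrak{q}}(x_i)+v_{\mathfrak{q}}(b)$ — is precisely Corollary \ref{cor:primeupper}; this identifies $r_i$ with $(x_i\mid u)_v$ and forces uniqueness. Your argument is in the same spirit as the paper's Section \ref{sec4} remark expressing $(x\mid u)_v$ as $\min_{\mathfrak{p}\ni x}\lfloor v_{\mathfrak{p}}(u)/v_{\mathfrak{p}}(x)\rfloor$ in the Krull case, and in fact supplies the proof the paper omits.
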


    Let $\UU$ be an upper cluster algebra.
    We can interpret the valuation pairing in terms of discrete valuations on $\UU$.
    By definition, $\UU$ is an intersection of (possibly infinitely many) Laurent polynomial rings.
    Each of these Laurent polynomial rings gives rise to a family of discrete valuations, arising from the height-1 prime ideals of the Laurent polynomial ring.
    Let $\{\,v_i :i \in I\,\}$ be the set of all such discrete valuations arising from all the Laurent polynomial rings.

    Let $x\in \mathcal{X}$ and let $V \subseteq \{\, v_i : i \in I\,\}$ the subset of all valuations $v_i$ for which $v_i(x)>0$. Now consider an element $u \in \UU^\bullet$ and set $r=(x\mid u)_v.$ 
	Let us write $u=x^ry$ with $y\in\UU$ such that $x\nmid y$.
	Clearly, we have that $v_{i}(x)r\le v_{i}(u),$ for every $v_i \in V$, hence
	$$r\le \inf_{v_i\in V}\bigg\lfloor\frac{v_{i}(u)}{v_{i}(x)}\bigg\rfloor.$$ Let $s:=\inf_{v_i\in V}\Big\lfloor\frac{v_{i}(u)}{v_{i}(x)}\Big\rfloor.$ Then $u/x^s\in\UU$ since $$v_{i}(ux^{-s})=v_{i}(u)-sv_{i}(x)\ge v_{i}(u)-\bigg\lfloor\frac{v_{i}(u)}{v_{i}(x)}\bigg\rfloor v_{i}(x)\ge  v_{i}(u)-\frac{v_{i}(u)}{v_{i}(x)}v_{i}(x)=0,$$ hence $$(x\mid u)_v=\inf_{v_i\in V}\frac{v_{i}(u)}{v_{i}(x)}.$$

    \begin{remark}
    If $\UU$ is a Krull domain, then the discrete valuations $V$ are just those arising from the finitely many height-1 prime ideals of $\UU$ containing $x$, so in this case
    \[
    (x\mid u)_v=\min_{\substack{\mathfrak p \in \mathfrak X(\UU)\\ x \in \mathfrak p}} \bigg\lfloor \frac{v_{\mathfrak{p}}(u)}{v_{\mathfrak{p}}(x)} \bigg\rfloor.
    \]
    \end{remark}	 
	
	\bibliographystyle{alpha}
	\bibliography{cluster.bib}

\end{document}